\numberwithin{equation}{section}
\newtheorem*{rep@theorem}{\rep@title}
\newcommand{\newreptheorem}[2]{%
\newenvironment{rep#1}[1]{%
 \def\rep@title{#2 \ref{##1}}%
 \begin{rep@theorem}}%
 {\end{rep@theorem}}}
\newtheorem{theorem}{Theorem}[section]
\newtheorem{maintheorem}{Theorem}
\newtheorem{proposition}[theorem]{Proposition}
\newtheorem{corollary}[theorem]{Corollary}
\newtheorem{lemma}[theorem]{Lemma}
\theoremstyle{definition}
\newtheorem{remark}[theorem]{Remark}
\newtheorem{example}[theorem]{Example}
\newcommand{\defnword}[1]{\textbf{#1}}
\g@addto@macro\bfseries{\boldmath}
\title{Smooth manifolds with prescribed rational cohomology ring}
\author{Jim Fowler}
\address[J.~Fowler]{The Ohio State University, Columbus, Ohio 43210}
\email[J.~Fowler]{fowler@math.osu.edu}
\author{Zhixu Su}
\address[Z.~Su]{University of California, Irvine, California 92697}
\email[Z.~Su]{zhixus@uci.edu}
\newcommand{\Z}{\mathbb{Z}}
\newcommand{\Q}{\mathbb{Q}}
\newcommand{\OP}{\mathbb{O}P}
\newcommand{\CP}{\mathbb{C}P}
\newcommand{\HP}{\mathbb{H}P}
\newcommand{\LP}{{\mathcal{L}}}
\newcommand{\A}{{\mathcal{A}}}
\newcommand{\df}{\displaystyle\frac}
\newcommand{\ds}{\displaystyle\sum}
\newcommand{\dpr}{\displaystyle\prod}
\DeclareMathOperator{\numer}{numer}
\DeclareMathOperator{\denom}{denom}
\DeclareMathOperator{\wt}{wt}
\begin{document}
\begin{abstract}
  The Hirzebruch signature formula provides an obstruction to the
  following realization question: given a rational Poincar\'e duality
  algebra $\A$, does there exist a smooth manifold $M$ such that
  $H^*(M;\Q)=\A$?

  This problem is especially interesting for rational truncated
  polynomial algebras whose corresponding integral algebra is not
  realizable.  For example, there are number theoretic constraints on
  the dimension $n$ in which there exists a closed smooth
  manifold $M^n$ with $H^*(M^n;\Q)= \Q[x]/\langle x^3\rangle$.  We limit the possible existence dimension to $n=8(2^a+2^b)$. For $n
  = 32$, such manifolds are not two-connected. We show that the next smallest
  possible existence dimension is $n=128$. As there exists no integral $\OP^m$ for $m>2$, the realization of the truncated polynomial algebra $\Q[x]/\langle x^{m+1}\rangle, |x|=8$ is studied. Similar considerations provide
  examples of topological manifolds which do not have the rational
  homotopy type of a smooth closed manifold.

  The appendix presents a recursive algorithm for efficiently
  computing the coefficients of the $\LP$-polynomials, which arise in
  the signature formula.
\end{abstract}

\maketitle

\section{Introduction}
\label{section:introduction}

Let $\A$ be a $1$-connected graded commutative algebra over $\Q$
satisfying the Poincare duality. We ask if there exists a
simply-connected closed manifold $M$ such that $H^*(M;\Q)=\A$. In
general, such realization problem can be studied by rational surgery
(\cite{MR440574}, \cite{MR646078}). When the dimension of $\A$ is $n=4k$, there
exists such a manifold $M$ only if there is a choice of fundamental
class $\mu\in (\A^0)^*$ such that the bilinear form on $\A^{2k}$
defined as $(\cdot \smile \cdot, \mu)$ has the signature
$\sigma(\A,\mu)$ equal to the signature of the manifold $\sigma(M)$,
which by Hirzebruch signature theorem is equal to the $\LP$-genus
$\langle\LP_k(p(\tau_M), [M]\rangle$.

The case of $\A=\Q[x]/\langle x^3\rangle, |x|=2k, k>8$ was studied in
\cite{MR3158765}. A closed smooth manifold with such rational
cohomology ring is called a \defnword{rational projective plane}. It
is shown that a rational projective plane must be of dimension $n=8k$
for $k>1$.  After 4, 8, and 16 (i.e., the dimensions of $\CP^2$,
$\HP^2$ and $\OP^2$ respectively), the next smallest $n$ for which
there is an $n$-dimensional rational projective plane is $n = 32$.

\subsection*{Main results}

In Section~\ref{section:rational-planes}, by a number theoretic
observation on the coefficients of the signature equation, we limit
the possible existence dimensions of a projective plane to
$n=8(2^a+2^b)$, where $a, b$ are arbitrary nonnegative
integers. 
\begin{maintheorem}\label{maintheorem:two-nonzero-bits}
  For $n\geq 8$, a rational projective plane can only exist in
  dimension $n = 8k$ with $k=2^a+2^b$ for some nonnegative integers
  $a, b$.
\end{maintheorem}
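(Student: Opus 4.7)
The plan is to convert Hirzebruch's signature formula into a two-adic obstruction. Since the earlier work of Su referenced in \cite{MR3158765} already forces $n = 8k$, it suffices to show that the binary Hamming weight $\wt(k)$ is at most $2$, which is exactly the condition $k = 2^a + 2^b$. With $|x| = 4k$, the ring $H^*(M;\Q)$ lives only in degrees $0$, $4k$, $8k$, so the rational Pontryagin classes $p_i(M)$ vanish outside $i \in \{k,\,2k\}$. Consequently $\LP_{2k}$, evaluated on $M$, collapses to $s_{2k}\,p_{2k} + c_{k,k}\,p_k^2$, where $s_{2k}$ and $c_{k,k}$ are the coefficients of $p_{2k}$ and of $p_k^2$ in the Hirzebruch $\LP$-polynomial, and the signature theorem yields the Diophantine equation
\[
  s_{2k}\,\langle p_{2k},[M]\rangle + c_{k,k}\,\langle p_k^2,[M]\rangle \;=\; 1
\]
in which both Pontryagin numbers are integers.

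The first step is to compute $v_2(s_{2k})$ from the closed form $s_j = 2^{2j}(2^{2j-1}-1)\,|B_{2j}|/(2j)!$. Combining von Staudt--Clausen ($v_2(B_{4k}) = -1$) with Legendre's formula ($v_2((4k)!) = 4k - \wt(k)$) yields $v_2(s_{2k}) = \wt(k) - 1$. The second, more delicate, step is to prove the identity
\[
  c_{k,k} \;=\; \tfrac{1}{2}\bigl(s_k^{\,2} - s_{2k}\bigr).
\]
I would prove this via the exponential representation $\LP = \exp\bigl(\sum_j \ell_j \pi_j\bigr)$ in Newton power sums $\pi_j = \sum_i y_i^j$ of the Pontryagin roots $y_i$, using that $\ell_j = (-1)^{j-1} s_j/j$ is forced by Newton's identity $\pi_j = (-1)^{j-1} j\,p_j + \cdots$ together with the fact that in the degree-$j$ piece of $\LP$ the only $\pi_j$-contribution is $\ell_j \pi_j$. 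In the degree-$2k$ piece of $\exp\bigl(\sum_j \ell_j \pi_j\bigr)$ the only monomials that can produce $p_k^2$ are $\ell_{2k}\,\pi_{2k}$ and $\tfrac{1}{2}\ell_k^{\,2}\,\pi_k^{\,2}$, because three or more non-constant Newton-sum factors cannot distribute two copies of $p_k$ and for two factors Newton forces both indices to equal $k$. Newton's identity also gives the $p_k^2$-coefficient $k$ inside $\pi_{2k}$ and $k^2$ inside $\pi_k^{\,2}$, so $c_{k,k} = k\,\ell_{2k} + \tfrac{k^2}{2}\,\ell_k^{\,2} = \tfrac{1}{2}(s_k^2 - s_{2k})$.

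Combining the two computations: when $\wt(k) \geq 2$, $v_2(s_k^{\,2}) = 2\wt(k) - 2$ strictly exceeds $v_2(s_{2k}) = \wt(k) - 1$, so $v_2(s_k^{\,2} - s_{2k}) = \wt(k) - 1$ and hence $v_2(c_{k,k}) = \wt(k) - 2$. If $\wt(k) \geq 3$, then both $v_2(s_{2k}) \geq 2$ and $v_2(c_{k,k}) \geq 1$, so the left-hand side of the signature equation has two-adic valuation at least $1$ while the right-hand side has valuation $0$---a contradiction for any integer Pontryagin numbers. Therefore $\wt(k) \leq 2$, completing the proof. I expect the main obstacle to be the clean derivation of $c_{k,k} = \tfrac{1}{2}(s_k^2 - s_{2k})$: direct expansion of $\LP_{2k}$ in Pontryagin classes is combinatorially daunting, and it is the exponential-in-Newton-power-sums representation that tames the problem by restricting to just these two contributing monomials.
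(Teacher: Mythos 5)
Your proof is correct and follows the same structure as the paper's: reduce to the Diophantine equation $s_{k,k}x^2+s_{2k}y=\pm1$, compute $\nu_2(s_{2k})=\wt(k)-1$ via Legendre's formula and von Staudt--Clausen, and feed the identity $s_{k,k}=\tfrac12(s_k^2-s_{2k})$ into a $2$-adic parity obstruction when $\wt(k)\geq 3$. The one genuine addition is that you prove the identity $s_{k,k}=\tfrac12(s_k^2-s_{2k})$ via the logarithm of $\LP$ in Newton power sums, whereas the paper simply cites it; your derivation is clean and correct, but it does not change the overall argument.
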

Certain specific dimensions after 32 will be eliminated in
Section~\ref{section:very-high-dimensional}, so that the next smallest
$n$ for which an $n$-dimensional rational projective plane could exist
is $n=128$.

In Section~\ref{section:connectivity}, we study the connectivity of
the 32-dimensional rational projective planes from
\cite{MR3158765}. In particular, we show that such a manifold does not
admit a Spin structure and thus is not 2-connected.  As a consequence,
we have the following result.
\begin{maintheorem}\label{maintheorem:not-two-connected}
  A 32-dimensional rational projective plane is not 2-connected.
\end{maintheorem}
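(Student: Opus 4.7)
The plan is to show that 2-connectedness would force $M$ to be Spin, and then to derive a contradiction from the Atiyah--Singer integrality of the $\hat A$-genus.

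First, if $M^{32}$ is 2-connected then Hurewicz gives $H_2(M;\Z)=0$, hence $H^2(M;\Z/2)=0$ by the universal coefficient theorem, so $w_2(M)=0$ and $M$ admits a Spin structure. By the Atiyah--Singer index theorem, $\hat A(M)$ is then an integer.

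Next, I would exploit the narrowness of $H^*(M;\Q)$: since $H^{4i}(M;\Q)=0$ for $i\notin\{0,4,8\}$, all rational Pontryagin classes $p_i(TM)$ with $i\notin\{4,8\}$ vanish. Writing $p_4(TM)=\alpha x$ and $p_8(TM)=\beta x^2$ for a generator $x\in H^{16}(M;\Q)$, the only weight-$8$ monomials in $p_1,\dots,p_8$ that survive on $M$ are $p_4^2$ and $p_8$, so $L_8$ and $\hat A_8$ both collapse to two-term expressions
\[
L_8[M]=c_1\alpha^2+c_2\beta,\qquad \hat A_8[M]=d_1\alpha^2+d_2\beta,
\]
with explicit rationals $c_i,d_i$ computable via the recursion of the appendix. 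The Hirzebruch signature formula gives $c_1\alpha^2+c_2\beta=\pm 1$; the Spin hypothesis gives $d_1\alpha^2+d_2\beta\in\Z$; and integrality of the Pontryagin numbers forces $\alpha^2,\beta\in\Z$, as already used in \cite{MR3158765}.

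The main obstacle will be the final arithmetic step. The denominators of $c_1,c_2,d_1,d_2$ are controlled by the Bernoulli numerators $B_8$ and $B_{16}$, whose $2$-adic valuations are well understood. The signature equation pins $(\alpha^2,\beta)$ down modulo the common denominator of the $c_i$, while the $\hat A$-equation imposes a further congruence modulo the common denominator of the $d_i$. I expect these two congruences to be mutually inconsistent, and verifying this inconsistency, likely by reducing modulo a suitable power of $2$, is the crux of the argument; the explicit computation of $c_1,c_2,d_1,d_2$ is routine given the appendix's algorithm, whereas the $2$-adic incompatibility is where the geometric content of non-2-connectedness appears.
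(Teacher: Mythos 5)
Your framing is the same as the paper's: 2-connectedness forces $H^2(M;\Z)=0$, hence $w_2(M)=0$, hence a Spin structure, and you then seek a contradiction between the signature equation and an integrality condition on Spin characteristic numbers. Your reduction of $L_8$ and $\hat A_8$ to two-term expressions in $\alpha^2$ and $\beta$ is also exactly what happens.

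The gap is in the choice of integrality condition. You propose to use only $\hat A(M)\in\Z$. This is the weakest of the conditions available for a closed Spin manifold, and it is not enough. The paper instead invokes Stong's theorem, which identifies the image of $\Omega^{Spin}_{8k}/\mathrm{tor}$ in $H_{8k}(BSpin;\Q)$ as the full lattice dual to $\Z[e_1,e_2,\dots]\cdot\hat A$, where the $e_i$ are the $KO$-theoretic Pontryagin characters. Evaluating this in dimension $32$ produces an entire list of congruences on $\alpha^2$ and $\beta$ (Table~\ref{table:basis-for-ahat}), of which the $\hat A$-genus gives only the first line. The paper's contradiction uses in an essential way the much cheaper-looking condition $2520\divides\beta$ coming from the $e_5\cdot\hat A$ line, which converts the signature equation into a quadratic-residue problem modulo $2520\cdot 118518239$ that fails already at the prime $2$. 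That divisibility does not follow from $\hat A$-integrality alone. Indeed, if one carries your scheme out $2$-adically, the combination of the signature equation with $\hat A\in\Z$ forces $\nu_2(\alpha)=1$, $\beta$ odd, and $\beta\equiv 7\pmod 8$, which rules out the sign $\sigma=-1$ but is perfectly consistent with $\sigma=+1$ (and remains consistent modulo $16$ and $32$). So the expected $2$-adic incompatibility does not materialize from $\hat A$-integrality alone.

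Separate from the choice of lemma, you also leave the arithmetic step as an expectation rather than a verified claim. To close the argument you should replace the Atiyah--Singer integrality of $\hat A$ with the Stong lattice conditions, extract the relation $2520\divides\beta$ (or any other sufficiently strong line from the table), and then run the quadratic-residue analysis modulo the enlarged modulus.
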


The truncated polynomial algebra $\Q[x]/\langle x^{m+1}\rangle, |x|=8$
will be considered in Section~\ref{section:octonionic-examples}. While the construction
of $\OP^2$ does not generalize to give any higher dimensional
octonionic projective spaces, we show that a rational $\OP^{m}$ exists for $m$ odd.
\begin{maintheorem}\label{maintheorem:rational-op3}
 If $m>2$ is odd, there exists a closed smooth $8m$-dimensional manifold $M$ with rational cohomology ring
  $$
  H^*(M;\Q) \cong \Q[x]/\langle x^{m+1}\rangle \quad |x| = 8.
  $$
\end{maintheorem}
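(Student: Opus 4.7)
The plan is to invoke the Sullivan--Barge rational realization theorem: a simply-connected rational Poincar\'e duality algebra $\A$ of formal dimension $n = 4k \geq 8$ is of the form $H^*(M;\Q)$ for some closed smooth simply-connected manifold $M^n$ precisely when one can choose rational Pontryagin classes $p_i \in \A^{4i}$ satisfying the Hirzebruch signature equation
$$
\langle \LP_k(p_1,\ldots,p_k),\mu\rangle \;=\; \sigma(\A,\mu).
$$
This is the same framework under which the rational projective planes of the earlier sections are realized, so the proof reduces to producing a valid Pontryagin tuple for the algebra $\A = \Q[x]/\langle x^{m+1}\rangle$, $|x| = 8$, which is clearly a Poincar\'e duality algebra of formal dimension $8m$.

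The key observation is that when $m$ is odd the signature of $\A$ vanishes for trivial reasons. The nonzero graded pieces of $\A$ lie only in degrees divisible by $8$, but the middle degree is $4m \equiv 4 \pmod 8$, so $\A^{4m} = 0$. The cup-product pairing on the middle dimension is therefore identically zero and $\sigma(\A,\mu) = 0$ for any choice of fundamental class.

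It then suffices to exhibit $p_i \in \A^{4i}$ with $\LP_{2m}(p) = 0 \in \A^{8m}$. The degree constraint already forces $p_i = 0$ whenever $i$ is odd, and the most economical choice is to take $p_i = 0$ for every $i$. With this choice $\LP_{2m}(0,\ldots,0) = 0$, matching the vanishing signature, and Sullivan--Barge applied in formal dimension $8m \geq 24$ produces the desired closed smooth manifold $M^{8m}$ with $H^*(M;\Q) \cong \A$.

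Because the signature target is zero and the trivial Pontryagin tuple satisfies every relevant integrality condition, I do not anticipate any serious obstacle; the only point requiring care is citing the Sullivan--Barge criterion in a precise form, so that the simple-connectivity and dimension hypotheses of the realization theorem are visibly met. The contrast with the even-$m$ case, where $\A^{4m}$ is nonzero and the signature equation becomes a genuine number-theoretic constraint (as in the analysis of rational projective planes), is what restricts the conclusion of the theorem to odd $m$.
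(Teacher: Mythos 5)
Your argument is essentially the paper's own: observe that for odd $m$ the middle degree $4m$ is not a multiple of $8$, so $\A^{4m}=0$, the signature vanishes, and the trivial Pontryagin tuple $p_i=0$ satisfies the signature equation (and, vacuously or trivially, the intersection-form and Pontryagin-number conditions of Barge--Sullivan). The only step you elide is that Barge--Sullivan takes a $\Q$-local Poincar\'e complex $X$ as input, not an algebra, so one must first invoke formality (truncated polynomial algebras are intrinsically formal) to produce such an $X$ with $H^*(X;\Q)\cong\A$; the paper spells this out, but it is a standard preliminary and does not change the substance of the proof.
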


Finally, in Section~\ref{section:e8-manifolds}, we use similar
approach to study if a Milnor's $E_8$ manifold have the rational
homotopy type of a smooth manifold.
\begin{maintheorem}\label{maintheorem:e8-manifolds}
  If an $E_8$ manifold $M^{4k}$ $(k>1)$ has the rational homotopy type
  of a smooth manifold, then $k$ is even and $k$ has no more than 5
  nonzero bits in its binary expansion. In particular, $M^8$ has the
  rational homotopy type of a smooth manifold.
\end{maintheorem}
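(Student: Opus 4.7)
The plan is to mirror the number-theoretic analysis that underlies Theorem~\ref{maintheorem:two-nonzero-bits}, with the right-hand side of the signature equation replaced by $\sigma(E_8) = 8$ and the middle cohomology having rank $8$. Because $H^*(M^{4k};\Q)$ is concentrated in degrees $0$, $2k$, and $4k$, the only Pontryagin classes of $M$ that can be nonzero are $p_{k/2}$ (present only when $k$ is even) and $p_k$. Among monomials of weighted degree $k$ in $\LP_k$, only $p_k$ and $p_{k/2}^2$ survive on $M$, so the Hirzebruch signature formula collapses to
\begin{equation*}
  8 \;=\; \sigma(M) \;=\; s_{k}\, p_k[M] \;+\; s_{k/2,k/2}\, p_{k/2}^2[M],
\end{equation*}
where $s_k$ and $s_{k/2,k/2}$ are the relevant coefficients of $\LP_k$ (the second term is absent when $k$ is odd) and where $p_k[M]$ and $p_{k/2}^2[M]$ must be integers, being Pontryagin numbers of a smooth manifold.

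First, I would rule out odd $k > 1$. Here the equation is the single relation $s_k\, p_k[M] = 8$, so the numerator of $s_k$ in lowest terms must divide $8$. Using the closed form $s_k = 2^{2k}(2^{2k-1}-1)|B_{2k}|/(2k)!$, a Zsygmondy-type observation on the Mersenne factor $2^{2k-1}-1$ shows that for odd $k > 1$ there is a primitive odd prime factor that survives all cancellation with $(2k)!$ and with the numerator of $|B_{2k}|$, so the numerator of $s_k$ fails to divide $8$. Hence $k$ must be even.

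Next, for even $k$, I would bound $\nu(k)$, the number of ones in the binary expansion of $k$, by the same $2$-adic technique used in Theorem~\ref{maintheorem:two-nonzero-bits}. That argument, via von Staudt--Clausen and Kummer congruences, produces a lower bound on $-v_2\!\left(s_{k/2,k/2}\, a + s_k\, b\right)$ for $a, b \in \Z$ that grows with $\nu(k)$; where realization of a rational projective plane required this combination to equal a $2$-adic unit and so forced $\nu(k) \leq 2$, equality with $8 = 2^3$ permits three additional bits of denominator and therefore yields $\nu(k) \leq 5$.

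Finally, for the ``in particular'' clause, I would realize $M^8$ via Sullivan--Barge rational surgery. In dimension $8$ the signature formula reads $7\, p_2[M] - p_1^2[M] = 360$, which admits integer solutions such as $p_1^2[M] = 4$, $p_2[M] = 52$; since $\dim H^4 = 8$ there is ample freedom to pick $p_1 \in H^4(M;\Q)$ inside the $E_8$ lattice so that all remaining rational-surgery integrality conditions go through. I expect the main obstacle to be the third step: producing a $2$-adic denominator bound for arbitrary integer combinations of $s_k$ and $s_{k/2,k/2}$ sharp enough to yield exactly $\nu(k) \leq 5$, which will require carefully combining the recursion for the $\LP$-coefficients from the appendix with sharp Bernoulli-number congruences.
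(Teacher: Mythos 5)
Your proposal mirrors the paper's three-part structure exactly: rule out odd $k>1$, bound $\wt(k)\leq 5$ for even $k$ via the $2$-adic valuations of $s_{k/2,k/2}$ and $s_k$, and realize $k=2$ by Barge--Sullivan surgery. Parts two and three are in substance the paper's Propositions~\ref{proposition:E8-two-adic-nonrealization} and \ref{proposition:E8-realization}. Part one takes a genuinely different route, and it is arguably cleaner. The paper observes that $\numer(B_{2k}/2k)$ is a product of irregular primes $p>2k+1$, which kills the single relation $s_kx=8$ whenever that numerator is $\neq 1$, and then handles the exceptional cases $k\in\{3,5,7\}$ by direct computation. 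Your Zsygmondy/Bang argument on the Mersenne factor $2^{2k-1}-1$ handles all $k>1$ uniformly: Bang's theorem guarantees a primitive prime divisor $p$ for every odd exponent $2k-1\geq 3$, primitivity forces $2k-1\mid p-1$ hence $p\geq 2k+1$ so $p\nmid (2k)!$, and $p-1\nmid 2k$ for $k>1$ so $p\nmid\denom(B_{2k})$ by von Staudt--Clausen. There is, however, a slip in your wording: the possible cancellation to worry about is against the \emph{denominator} $(2k)!\,\denom(B_{2k})$, not the numerator of $|B_{2k}|$; and the conclusion is that $\numer(s_k)$ has an odd prime factor (so cannot divide $8$, a power of $2$).

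Two further small points. In part two you write a lower bound on $-v_2$ where you mean a lower bound on $v_2$; Proposition~\ref{proposition:2-adic} gives $\nu_2(s_{k/2,k/2}),\nu_2(s_k)\geq\wt(k)-2$, so for integer $a,b$ the left side of the signature equation has $\nu_2\geq\wt(k)-2$, which exceeds $\nu_2(8)=3$ once $\wt(k)\geq 6$; Kummer congruences play no role. In part three your pair $(p_1^2,p_2)=(4,52)$ does satisfy $7\cdot 52-4=360$, but you do not verify condition~(iii) of Theorem~\ref{bs}: you must exhibit a closed smooth $8$-manifold with exactly these Pontryagin numbers. Your pair does lie in the oriented cobordism lattice, being $-20\cdot(25,10)+28\cdot(18,9)$ in terms of $\CP^4$ and $\CP^2\times\CP^2$, so it works, but the verification cannot be waved away by ``ample freedom.'' The paper's choice $(200,80)=8\cdot(25,10)$ is tactically superior here precisely because these are the Pontryagin numbers of a disjoint union of eight copies of $\CP^4$, making condition~(iii) immediate.
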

A surprising consequence is that the $504$-dimensional $E_8$ manifold
is a simply-connected topological manifold which does not have the
rational homotopy type of a smooth manifold.

In the Appendix, we give an \textit{efficient} recursive algorithm for
computing the coefficients of the $\LP$-polynomial.  Such an algorithm
is a useful tool for the study of the signature obstruction.

\section{Rational projective planes}
\label{section:rational-planes}

The Hopf Invariant One theorem implies that $\CP^2, \HP^2, \OP^2$ are
the only closed manifolds whose integral cohomology ring is
$H^*(-;\Z)\cong\Z[x]/\langle x^3\rangle$. Ignoring the torsion, we
seek realization of the rational cohomology algebra by closed smooth
manifolds in dimension greater than 16. We call a simply-connected
smooth closed manifold $M^{4k}$ such that $H^*(M;\Q)\cong\Q[x]/\langle
x^3\rangle$ a rational projective plane. It has been shown that a
rational projective plane only exists in dimension $n=8k$ if $k>1$
(\cite{MR3158765}).

If $M^{8k}$ is a rational projective plane, by the Hizebruch signature theorem, 
$$\langle\LP_k(p(\tau_M)),[M]\rangle=\sigma(M)=\pm1$$
Since $H^*(M;\Q)\cong\Q[x]/\langle x^3\rangle, |x|=4k$, the Pontraygin class $p_i(\tau_M)\in H^{4i}(M;\Q)$ is zero for all $i$ except $p_{k}(\tau_M)$ and $p_{2k}(\tau_M)$, the signature equation then becomes
$$s_{k,k}\langle p_{k}^2(\tau_M),[M]\rangle+s_{2k}\langle p_{2k}(\tau_M),[M]\rangle=\pm 1$$
Let $\alpha\in H^{4k}(M;\Z)$ be the generator such that $\langle\alpha\smile\alpha, [M]\rangle=\pm1$, then $p_{k}(\tau_M)=x\alpha$ and $p_{2k}(\tau_M)=y\alpha^2$ for some integers $x$ and 
$y$. Then we have the diophantine equation
\begin{equation} \label{eq}
  s_{k,k}x^2+s_{2k}y=\pm 1.
\end{equation}
A formulas for $s_{k,k}$ and $s_{2k}$ can be found below in Equation~\eqref{sk} and Equation~\eqref{sk2}.

By examining the coefficients $s_{k,k}$ and $s_{2k}$, we will prove the following theorem.
\begin{repmaintheorem}{maintheorem:two-nonzero-bits}
For $n\geq 8$, a rational projective plane can only exist in dimension $n=8k$ with $k=2^a + 2^b$ for some 
  nonnegative integers $a, b$.
\end{repmaintheorem}
\begin{proof}
We firstly prove the following proposition on the $2$-adic valuation of the coefficients $s_{k,k}$ and $s_k$.

\begin{proposition}
  \label{proposition:2-adic}
  Let $k$ be an integer with $m>2$ nonzero bits in its binary
  expansion.  Then the numerators of the irreducible fractions of $s_{k,k}$ and $s_{2k}$ are
  divisible by $2^{m-2}$.
\end{proposition}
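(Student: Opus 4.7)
The plan is to express both $s_{2k}$ and $s_{k,k}$ as explicit rational expressions in Bernoulli numbers and powers of $2$, and then to read off their $2$-adic valuations using Legendre's formula together with the von Staudt--Clausen theorem. For $s_{2k}$, the standard computation starting from the generating series $Q(z)=\sqrt z/\tanh\sqrt z$ of the $\LP$-polynomial gives the classical identity
\[
  s_n \;=\; \frac{2^{2n}(2^{2n-1}-1)\,|B_{2n}|}{(2n)!}
\]
for the coefficient of $p_n$ in $\LP_n$, and hence a closed form for $s_{2k}$ by setting $n=2k$. For $s_{k,k}$ I would specialize $\LP_{2k}$ to formal Pontryagin roots satisfying $\prod_i(1+x_iT)=1+p_kT^k$, so that $p_j=0$ for $j\neq k$. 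Writing $\log\LP=\sum_j d_jP_j$ in the power sums $P_j=\sum_i x_i^{\,j}$, this constraint forces $P_j=0$ unless $k\mid j$, so of all partitions of $2k$ only $(2k)$ and $(k,k)$ contribute to $\LP_{2k}$. Collecting the surviving terms and eliminating $d_k,d_{2k}$ in favor of $s_k,s_{2k}$ yields the compact identity
\[
  s_{k,k} \;=\; \tfrac12\bigl(s_k^{2} - s_{2k}\bigr),
\]
which can be sanity-checked against $\LP_2=(7p_2-p_1^{2})/45$ and the known expansion of $\LP_4$.

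With these closed forms in hand the $2$-adic computation is routine. Let $w(n)$ denote the binary digit sum of $n$. Legendre's formula gives $v_2((2n)!)=2n-w(n)$, using that $w(2n)=w(n)$. Von Staudt--Clausen forces $v_2(B_{2n})=-1$ for every $n\geq 1$, since the prime $2$ always contributes $1/2$ to the Clausen sum while every other term $1/p$ with $p$ odd is $2$-integral. Combined with $v_2(2^{2n-1}-1)=0$, these give
\[
  v_2(s_n)\;=\;2n-1-(2n-w(n))\;=\;w(n)-1.
\]
Hence $v_2(s_{2k})=w(2k)-1=m-1$, while $v_2(s_k^{2})=2(m-1)$. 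For $m\geq 2$ the strict inequality $2m-2>m-1$ prevents cancellation, so $v_2(s_k^{2}-s_{2k})=m-1$ and therefore $v_2(s_{k,k})=m-2$. Both numerators, after reducing to lowest terms, are thus divisible by $2^{m-2}$.

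The main obstacle, I expect, is deriving the identity $s_{k,k}=\tfrac12(s_k^{2}-s_{2k})$ cleanly; once in place, the rest is standard $2$-adic bookkeeping. One should also verify that after reduction no factor of $2$ slips into the denominator, but this is immediate from $v_2(s_n)\geq 0$ for $n\geq 1$ (because $w(n)\geq 1$), so the fractions are already $2$-integral and the entire $2$-part resides in the numerator.
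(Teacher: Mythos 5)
Your proof is correct and follows essentially the same route as the paper: the same closed form for $s_n$ from Equation~\eqref{sk}, Legendre's formula combined with the binary digit sum, the von Staudt--Clausen fact $\nu_2(B_{2n})=-1$, and the identity $s_{k,k}=\tfrac12(s_k^2-s_{2k})$. A small bonus of your write-up is that by observing $2(m-1)>m-1$ for $m\geq 2$ you pin down the exact valuation $\nu_2(s_{k,k})=m-2$, whereas the paper only records the lower bound $\nu_2(s_{k,k})\geq m-2$, which is all the proposition requires.
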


\begin{proof}
We fix our notation: the {\it Hamming weight} function $\wt: \Z
\to \Z$ sends an integer $x$ to the number of 1's in the binary
expansion of $x$; the {\it$2$-adic valuation,} $\nu_2 : \Z \to
\Z$, sends $x$ to the largest integer $\nu$ so that $2^{\nu}$ divides
$x$; the extended {\it$2$-adic valuation,} $\nu_2 : \Q \to
\Z$ is defined as $\nu_2(\frac{x}{y})=\nu_2(x)-\nu_2(y)$. Then the proposition asserts that $\nu_2(s_{k,k})$ and
$\nu_2(s_{2k})$ are at least $\wt(k) - 2$ when $\wt(k)\geq 2$. We will need the following lemma to prove the proposition.

\begin{lemma}
  For any integer $x$,
  \[
  \wt(x) + \nu_2(x!) = x.
  \]
\end{lemma}
\begin{proof}
  Let $\lfloor - \rfloor$ and $\{ - \}$ denote the floor and
  fractional part function, respectively. Then
  \begin{align*}
    x
&= \sum_{k=1}^\infty \frac{x}{2^k} \\
&= \left( \sum_{k=1}^\infty \left\{ \frac{x}{2^k} \right\} \right) + \left( \sum_{k=1}^\infty \left\lfloor\frac{x}{2^k}\right\rfloor \right) \\
&= \wt(x) + \nu_2(x!).
  \end{align*}
\end{proof}

Let $B_{k}=\df{\numer(B_{k})}{\denom(B_{k})}$ denotes the irreducible
fraction. It is a fact that
\[
\denom(B_{k})=\displaystyle\prod_{p-1\,|\, k}p
\] where the product is over all the primes $p$ such that $p-1\,|\,
k$. This implies that when $k>0$, $\nu_2(B_{k}) = -1$.

By formula \eqref{sk}, the coefficient
$$s_{k}=\displaystyle\frac{2^{2k}(2^{2k-1}-1)|B_{2k}|}{(2k)!},$$
then the 2-adic valuation of $s_{k}$ is 
\begin{align*}
\nu_2(s_{k}) 
&= \nu_2(2^{2k}) + \nu_2(2^{2k - 1} - 1) + \nu_2(B_{2k}) - \nu_2\left((2k)!\right) \\
&= 2k + 0 + \nu_2(B_{2k}) - \nu_2\left((2k)!\right) \\
&=(\wt(2k)+\nu_2\left((2k)!\right)) + \nu_2(B_{k}) - \nu_2\left((2k)!\right) \\
&= \wt(2k) + \nu_2(B_{k}),\\
&=\wt(k)-1.
\end{align*}

We also have $\nu_2(s_{2k})=\wt(2k)-1=\wt(k)-1$. Therefore when
$\wt(k)>1$, $\numer(s_{2k})$ is divisible by $2^{\wt(k)-1}$.

On the other hand, the coefficient
\[
s_{k,k} = \frac{{s_k}^2 - s_{2k}}{2},
\]
so when $k>0$, the 2-adic valuation is 
\begin{align*}
  \nu_2(s_{k,k})
&\geq \min \{ \nu_2({s_k}^2),\nu_2(s_{2k})\} - 1 \\
&= \min \{ 2 \cdot \left(\wt(k) - 1\right)), \wt(k) - 1 \} - 1 \\
&= \wt(k) - 2.
\end{align*}

Therefore when $\wt(k)>2$, $\numer(s_{k,k})$ is divisible by
$2^{\wt(k)-2}$. This proves
Proposition~\ref{proposition:2-adic}.\end{proof}

Now we return to the proof of Theorem~\ref{maintheorem:two-nonzero-bits}. Recall that a
necessary condition for the existence of a rational projective plane
in dimension $8k$ is that the Diophantine equation
\[
s_{k,k}x^2+s_{2k}y=\pm 1
\]
has integer solutions $x$ and $y$. By
Proposition~\ref{proposition:2-adic}, when $\wt(k)>2$, we have $2
\divides \numer(s_{k,k})$ and $2 \divides \numer(s_{2k})$. Therefore
the diophantine equation has no solution when $\wt(k)>2$. Hence a
rational projective plane can only exist in dimension $n=8k$ where
$w(k)=1$ or $2$, i.e., when $k=2^a+2^b$ for some nonnegative integers
$a, b$.

\end{proof}

\section{Ruling out some specific dimensions up to 128}
\label{section:up-to-dimension-128}

It has been shown that after dimension $16$ (for which there is the
octonionic projective plane), the next smallest dimension where a
rational analog of projective plane exists is $32$
(\cite{MR3158765}). By Theorem~\ref{maintheorem:two-nonzero-bits}, the candidate dimensions
are $n=40, 48, 64, 72, 80, 96,128, \ldots$. We will rule out all those
that are less than 128 and show that the signature equation
\textit{does} have a solution for dimension 128.

\begin{proposition}
  There is no rational projective plane in dimension 64.
\end{proposition}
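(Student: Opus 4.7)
The plan is to specialize Diophantine equation~\eqref{eq} to $k=8$ and show that
\[
s_{8,8}\, x^2 + s_{16}\, y = \pm 1
\]
admits no integer solution. Because $k = 8 = 2^3$ has Hamming weight one, Theorem~\ref{maintheorem:two-nonzero-bits} is silent here and the $2$-adic obstruction of Proposition~\ref{proposition:2-adic} is unavailable, so I expect to rule the equation out $p$-adically for some \emph{odd} prime $p$.

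The mechanism mirrors the proof of Theorem~\ref{maintheorem:two-nonzero-bits}: exhibit an odd prime $p$ with $\nu_p(s_{8,8}) \geq 1$ and $\nu_p(s_{16}) \geq 1$. Any integers $x,y$ would then satisfy
\[
\nu_p\bigl(s_{8,8}\, x^2 + s_{16}\, y\bigr) \;\geq\; \min\bigl(\nu_p(s_{8,8}),\,\nu_p(s_{16})\bigr) \;\geq\; 1,
\]
contradicting $\nu_p(\pm 1) = 0$.

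To locate $p$, I would compute the coefficients explicitly. Equation~\eqref{sk} yields
\[
s_8 = \frac{2^{16}(2^{15}-1)\,|B_{16}|}{16!}, \qquad s_{16} = \frac{2^{32}(2^{31}-1)\,|B_{32}|}{32!},
\]
and then $s_{8,8} = (s_8^2 - s_{16})/2$. Substituting $B_{16} = -3617/510$ together with the known value of $B_{32}$---or, more systematically, running the recursive algorithm of the Appendix---produces explicit rationals whose numerators one factors. A natural first candidate is the irregular prime $3617$: it divides $\numer(B_{16})$, hence $\numer(s_8)$ and $\numer(s_8^2)$, so I would test whether $3617$ also divides $\numer(s_{16})$, in which case the identity $s_{8,8} = (s_8^2 - s_{16})/2$ automatically forces $\nu_{3617}(s_{8,8}) \geq 1$ as well. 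If $3617$ fails, the next candidates are the large prime factors of $\numer(B_{32})$ compared against the primes appearing in $\numer(s_{8,8})$.

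The conceptual work is slight; the real obstacle is the arithmetic bookkeeping, namely producing the explicit factorizations of $\numer(s_{8,8})$ and $\numer(s_{16})$---which involve $|B_{32}|/32!$ and so are sizeable---and pinpointing a single odd prime dividing both. Once such a $p$ is in hand, the one-line valuation estimate above closes the argument exactly as in Proposition~\ref{proposition:2-adic}.
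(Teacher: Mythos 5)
Your proposed mechanism---find an odd prime $p$ dividing both $\numer(s_{8,8})$ and $\numer(s_{16})$---does not exist, so the ``arithmetic bookkeeping'' you defer would come up empty. From the identity $s_8^2 = 2s_{8,8} + s_{16}$, any odd prime dividing both $\numer(s_{8,8})$ and $\numer(s_{16})$ must also divide $\numer(s_8)$. But $s_8 = \dfrac{2^{16}(2^{15}-1)\cdot 3617}{16!\cdot 510}$, and after cancellation $\numer(s_8)$ has odd prime factors only $31, 151, 3617$ (coming from $2^{15}-1 = 7\cdot 31\cdot 151$ and $\numer(B_{16})=3617$). None of these divides $\numer(s_{16})$: the Mersenne prime $2^{31}-1$ is not among them, the factors of $\numer(B_{32}) = 37\cdot 683\cdot 305065927$ avoid them, and $31$ in fact appears in the \emph{denominator} of $s_{16}$ via $32!$. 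So the straight $p$-adic valuation estimate you write down can never fire, and no amount of searching among the factors of $\numer(B_{32})$ will find a witness.

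The missing idea is a quadratic residue argument, which is what the paper's Lemma~\ref{lemma:rule-out} encodes. The paper picks $p=37$, which divides $\numer(s_{16})$ (because $37$ is the smallest irregular prime and divides $\numer(B_{32})$) but does \emph{not} divide $\numer(s_8)$, so $s_{8,8}$ is a $37$-adic unit. Reducing the signature equation $s_{8,8}x^2 + s_{16}y = \pm 1$ modulo $37$ kills the $s_{16}y$ term and leaves $x^2 \equiv \pm 1/s_{8,8} \pmod{37}$; moreover $s_{8,8} \equiv s_8^2/2 \pmod{37}$ since $s_{16}\equiv 0$, so this is $x^2 \equiv \pm 2/s_8^2 \pmod{37}$. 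The equation then fails because neither $2$ nor $-2$ is a quadratic residue modulo $37$. That last step---exploiting that $p\equiv 5\pmod 8$ makes $\pm 2$ nonresidues---is the essential content, and it is absent from your plan. Your proposal correctly identifies that the $2$-adic obstruction is unavailable and that one should work $p$-adically for odd $p$, but it stops one step short: a prime dividing only $s_{16}$ does the job, provided you then check a quadratic residue condition rather than a valuation.
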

\begin{proof}
   It suffices to show that there is no solution to $s_{8,8} x^2 \pm
  s_{16} y = \pm 1$ for integers $x$ and $y$.  Note that $37$ divides
  the numerator of $s_{16}$, because $37$ divides $B_{32}$, a
  well-known fact considering 37's status as the smallest irregular
  prime.  So it is enough to show there is no solution to $x^2
  \not\equiv \pm 1/s_{8,8} \pmod{37}$.  Since $s_{16} \equiv 0
  \pmod{37}$,
  \begin{align*}
    s_{8,8}
    &\equiv \frac{{s_8}^2 - s_{16}}{2} \equiv \frac{{s_8}^2}{2} \pmod{37},
  \end{align*}
   but neither $2$ nor $-2$ is a quadratic residue modulo 37.
\end{proof}
This method of searching for a particular prime---played by $37$ to
prove nonexistence of a 64-dimensional example---can be applied to
rule out many other possible examples.
\begin{lemma}\label{lemma:rule-out}
  There is no $8n$-dimensional rational projective plane whenever there is a prime $p$ so that
  \begin{itemize}
  \item $2$ and $-2$ are quadratic nonresidues modulo $p$,
  \item $\nu_p(s_{2n}) > 0$, but
  \item $\nu_p(s_{n}) = 0$.
  \end{itemize}
\end{lemma}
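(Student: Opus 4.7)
The plan is to abstract the argument already given for the $n=8$, $p=37$ case (ruling out dimension 64) into the three hypothesized conditions on $p$. The setup is: assume for contradiction an $8n$-dimensional rational projective plane exists, so the Diophantine equation
\[
  s_{n,n} x^2 + s_{2n} y = \pm 1
\]
has an integer solution, and reduce this equation modulo $p$ to reach a contradiction with the quadratic-nonresidue hypothesis.

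The first step is a bookkeeping check that both $s_{n,n}$ and $s_{2n}$ actually reduce modulo $p$, i.e.\ that neither has $p$ in its denominator. Note $p$ must be odd, since $-2$ is assumed to be a nonresidue mod $p$. By hypothesis $\nu_p(s_{2n}) > 0$ and $\nu_p(s_n) = 0$, so applying the identity $s_{n,n} = (s_n^2 - s_{2n})/2$ (and using that $\nu_p(2) = 0$) yields $\nu_p(s_{n,n}) = 0$. Hence both coefficients lie in $\Z_{(p)}$ and the equation can be reduced modulo $p$.

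The second step is the reduction itself. Mod $p$, the term $s_{2n} y$ vanishes, so $s_{n,n} x^2 \equiv \pm 1 \pmod p$. Again using $s_{n,n} \equiv s_n^2/2 \pmod p$ (since $s_{2n} \equiv 0$), this becomes
\[
  (s_n x)^2 \equiv \pm 2 \pmod p.
\]
Because $s_n$ is a unit modulo $p$, the left-hand side ranges over arbitrary squares as $x$ varies; but the hypothesis says neither $+2$ nor $-2$ is a quadratic residue mod $p$, a contradiction.

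There is no real obstacle here: the three bullet points of the hypothesis are precisely what was used implicitly in the $p=37$ computation, so the proof is a direct generalization. The only mild subtlety is the valuation check ensuring that $s_{n,n}$ has no $p$ in its denominator; this follows automatically from the other two hypotheses together with $p$ being odd.
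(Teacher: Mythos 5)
Your proof is correct and follows exactly the argument the paper intends: the paper states the lemma without a formal proof, presenting it as the abstraction of the $p=37$ computation used to rule out dimension~$64$, and your reduction modulo $p$ (including the needed check that $\nu_p(s_{n,n})=0$ so that reduction is legitimate) is that abstraction carried out carefully.
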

To ensure $2$ is a quadratic nonresidue, it is enough that $p
\not\equiv \pm 1 \pmod 8$; to ensure that $-2$ is also a quadratic
nonresidue, we further want $p \equiv 5 \pmod 8$.  By checking the
handful of primes $p$ which divide the numerator of $s_{2n}$, we can
quickly find witnesses which rule out $n$-dimensional projective
planes for $n \in \{48, 64, 72, 96\}$; these witnesses are listed in
Table~\ref{table:witnesses}.

\begin{table}
  \caption{Primes $p$ ruling out $n$-dimensional rational projective planes via Lemma~\ref{lemma:rule-out}.}
  \label{table:witnesses}

  \begin{tabular}{l|l}
    $n$ & $p$ \\
    \hline
    48 & 2294797 \\
    64 & 37 \\
    72 & 26315271553053477373 \\
    96 & 653 \\
    136 & 101 \\
    160 & 10589 \\
  \end{tabular}
\end{table}

Between $32$ and $128$, the only integers $n$ which have at most two
nonzero bits in its binary expansion are $40$, $48$, $64$, $72$, $80$,
and $96$.  So by Proposition~\ref{proposition:2-adic}, these are only
possibilities for the dimensions of a rational projective plane when
$32 < n < 128$.  In light of Table~\ref{table:witnesses}, we can
restrict attention to $n = 40$ and $n = 80$.

\begin{proposition}\label{proposition:no-dimension-40}
  There is no 40-dimensional rational projective plane.
\end{proposition}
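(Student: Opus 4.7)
The plan is to adapt Lemma~\ref{lemma:rule-out} by combining two primes, one for each sign in the signature equation $s_{5,5} x^2 + s_{10} y = \pm 1$, since none of the odd primes dividing $s_{10}$ is congruent to $5 \pmod 8$. The key algebraic identity is $s_{5,5} = (s_5^2 - s_{10})/2$: for any odd prime $p$ with $\nu_p(s_{10}) > 0$ and $\nu_p(s_5) = 0$, this gives $s_{5,5} \equiv s_5^2/2 \pmod{p}$, so the signature equation reduces modulo $p$ to $(s_5 x)^2 \equiv \pm 2 \pmod{p}$. Thus an integer solution exists only if $\pm 2$ is a quadratic residue modulo $p$.

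Using $B_{20} = -174611/330$ with $174611 = 283 \cdot 617$, I would first compute that the numerator of $s_{10}$ in lowest terms is $2 \cdot 283 \cdot 617 \cdot 524287$, where $524287 = 2^{19} - 1$ is a Mersenne prime. Since $s_5 = 146/13365 = (2 \cdot 73)/(3^5 \cdot 5 \cdot 11)$, each of $283, 617, 524287$ is coprime to both the numerator and denominator of $s_5$, so the reduction above applies at each of these primes. However, $283 \equiv 3 \pmod 8$, $617 \equiv 1 \pmod 8$, and $524287 \equiv 7 \pmod 8$, so none is $\equiv 5 \pmod 8$ and Lemma~\ref{lemma:rule-out} does not rule out both signs simultaneously. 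Instead I proceed with two primes: at $p = 283$, the supplementary law gives $(2/283) = -1$, which rules out signature $+1$ (even though $(-2/283) = +1$); at $p = 524287$, one has $(2/524287) = +1$ but $(-1/524287) = -1$ (since $524287 \equiv 3 \pmod 4$), whence $(-2/524287) = -1$, which rules out signature $-1$. Together these two primes eliminate both signs of the signature equation, so no integer solution can exist.

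The only substantive observation is that the $\pm$ ambiguity in the signature can be split across two primes whose residues mod $8$ are complementary in precisely the right way; the remaining work---the prime factorization of $s_{10}$'s numerator via $B_{20}$, the verification that $\nu_p(s_5) = 0$ at the two chosen primes, and the computation of the relevant Legendre symbols---is mechanical and relies only on the supplementary laws of quadratic reciprocity.
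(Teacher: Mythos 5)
Your proof is correct and uses the same two witness primes ($p = 283$ for the $+1$ case, $q = 524287$ for the $-1$ case) that the paper uses. The one refinement is that you carry over the reduction $s_{5,5} \equiv s_5^2/2 \pmod p$ (which the paper applies in the dimension-64 case but not here), so that both quadratic-residue checks collapse to the supplementary laws for $(\pm 2/p)$ determined by $p \bmod 8$; the paper instead computes the explicit residues $1/s_{5,5} \equiv 146 \pmod{283}$ and $-1/s_{5,5} \equiv 318975 \pmod{524287}$ and checks those directly. The two arguments are equivalent, with yours slightly cleaner since it avoids the numerical residue computations.
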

\begin{proof}
  If there were, there would be integers $x$ and $y$ for which
\[
  s_{5,5}x^2+s_{10}y=\pm 1,
\]
but there are no such integers.

Suppose there were $x$ and $y$ so that $s_{5,5}x^2+s_{10}y=1$.  We use
the fact that $2^{2\cdot 10-1}-1 = 524287$ is prime and the fact that
$B_{20} = -174611/330$.  Then equation~\eqref{sk} yields
\begin{align*}
s_{10} &= \displaystyle\frac{2^{2\cdot 10}(2^{2\cdot 10-1}-1)|B_{2\cdot 10}|}{(2\cdot 10)!} \\
&= \displaystyle\frac{2^{20} \cdot 524287 \cdot 174611}{330 \cdot (20)!} \\
&= \displaystyle\frac{2^{2\cdot 10} \cdot 524287 \cdot 283 \cdot 617}{330 \cdot (20)!}.
\end{align*}
Set $p = 283$, so $s_{10} \equiv 0 \pmod{283}$.  Then we have a
solution to $x^2 \equiv 1/s_{5,5} \pmod{283}$.  But by Equation~\eqref{sk2},
\begin{align*}
1/s_{5,5} &= -4593988395871875/527062321 \\
&\equiv 146 \pmod{283},
\end{align*}
and $146$ is not a quadratic residue modulo 283, which is a
contradiction.

There is another case to consider: suppose there were $x$ and $y$ so
that $s_{5,5}x^2+s_{10}y=-1$.  In this case, consider $q = 524287$.
Again, we have $s_{10} \equiv 0 \pmod q$, and we assume there is an
integer $x$ so that $x^2 \equiv -1/s_{5,5} \pmod q$.  But arithmetic
reveals the contradiction: $-1/s_{5,5} \equiv 318975 \pmod q$, and
318975 is not a quadratic residue modulo $q$.
\end{proof}
We can boil this proof down into a lemma.
\begin{lemma}\label{lemma:rule-out-using-two-primes}
  There is no $8n$-dimensional rational projective plane whenever there are primes $p$ and $q$ so that
  \begin{itemize}
  \item $2$ and $-2$ are quadratic nonresidues modulo $p$,
  \item $\nu_p(s_{2n}) > 0$ and $\nu_q(s_{2n}) > 0$, 
  \item $1/s_{n,n}$ is a quadratic nonresidue modulo $p$,  and
  \item $-1/s_{n,n}$ is a quadratic nonresidue modulo $q$.
  \end{itemize}
\end{lemma}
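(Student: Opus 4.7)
The plan is to extract the two-prime version of the argument used in Proposition~\ref{proposition:no-dimension-40} into clean modular arithmetic statements. Suppose for contradiction that an $8n$-dimensional rational projective plane $M$ exists. By the signature equation~\eqref{eq} there are integers $x, y$ satisfying
\[
s_{n,n} x^2 + s_{2n} y = \varepsilon \qquad \text{for some } \varepsilon \in \{+1,-1\}.
\]
Each of the two sign possibilities will be ruled out by one of the two primes $p, q$ supplied by the hypotheses.

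For the case $\varepsilon = +1$, I would reduce the equation modulo $p$. The hypothesis $\nu_p(s_{2n}) > 0$ forces $s_{2n} y \equiv 0 \pmod p$, so the equation collapses to $s_{n,n} x^2 \equiv 1 \pmod p$. The hypothesis that $1/s_{n,n}$ makes sense and is a quadratic nonresidue modulo $p$ means $s_{n,n}$ is a unit mod $p$, and rewriting gives $x^2 \equiv 1/s_{n,n} \pmod p$, which has no solution. For $\varepsilon = -1$ the same maneuver with $q$ in place of $p$ yields $x^2 \equiv -1/s_{n,n} \pmod q$, contradicting the nonresidue hypothesis on $-1/s_{n,n}$. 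The two contradictions together exclude all integer solutions, completing the proof.

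There is essentially no main obstacle here: this is simply the abstraction of the concrete calculation in the proof of Proposition~\ref{proposition:no-dimension-40}, where the roles of $p$ and $q$ were played by $283$ and $524287$ respectively. The only subtlety worth flagging is invertibility of $s_{n,n}$ modulo the two primes, but this is implicit in the hypothesis that $\pm 1/s_{n,n}$ represents a specific residue class to be tested for being a square; one may also observe that the condition that $2$ and $-2$ are nonresidues modulo $p$ is then a convenient sufficient condition, since under $\nu_p(s_{2n}) > 0$ the formula $s_{n,n} = (s_n^2 - s_{2n})/2$ gives $s_{n,n} \equiv s_n^2/2 \pmod p$, so that $1/s_{n,n} \equiv 2/s_n^2 \pmod p$ inherits nonresidue status from $2$ whenever $p \nmid s_n$.
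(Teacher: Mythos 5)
Your proof is correct and is precisely the abstraction the paper intends: the paper omits an explicit proof of this lemma, remarking only that it ``boils down'' the two-prime argument given for Proposition~\ref{proposition:no-dimension-40}, and your proof (use $p$ via $\nu_p(s_{2n})>0$ to kill the $s_{2n}y$ term and contradict the $+1$ case with the nonresidue hypothesis on $1/s_{n,n}$, and symmetrically use $q$ for the $-1$ case) reproduces that abstraction faithfully. Your side observation that the hypothesis that $2$ and $-2$ are nonresidues modulo $p$ is logically redundant given the explicit nonresidue condition on $1/s_{n,n}$ is also accurate; it is carried over from Lemma~\ref{lemma:rule-out}, where it is what actually forces $1/s_{n,n}\equiv 2/s_n^2$ and $-1/s_{n,n}\equiv -2/s_n^2$ to be nonresidues.
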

We can rule out an 80-dimensional rational projective plane by choosing
\begin{align*}
p &= 1897170067619, \\
q &= 79.
\end{align*}
\begin{proposition}
  There is no $n$-dimensional rational projective plane for $32 < n < 128$.
\end{proposition}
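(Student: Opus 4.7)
The plan is to combine Theorem~\ref{maintheorem:two-nonzero-bits} with the elimination lemmas already established to dispatch each surviving candidate dimension in turn. By Theorem~\ref{maintheorem:two-nonzero-bits}, any rational projective plane of dimension $n$ with $32 < n < 128$ must satisfy $n = 8k$ for some $k$ with $\wt(k) \leq 2$. Enumerating the integers $k$ in the range $4 < k < 16$ having Hamming weight at most $2$, we obtain $k \in \{5,6,8,9,10,12\}$, which gives exactly six candidate dimensions, $n \in \{40, 48, 64, 72, 80, 96\}$.

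Four of these six candidates fall directly to Lemma~\ref{lemma:rule-out}, with the witnesses collected in Table~\ref{table:witnesses}: namely $n = 48, 64, 72, 96$. For each such $n$, I would verify that the tabulated prime $p$ satisfies $p \equiv 5 \pmod 8$ (ensuring both $2$ and $-2$ are quadratic nonresidues mod $p$), that $\nu_p(s_{2n}) > 0$ (using formula~\eqref{sk} together with the factorization of the relevant Bernoulli numerator), and that $\nu_p(s_n) = 0$. These are routine modular computations once the coefficients $s_n, s_{2n}$ are in hand.

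The remaining two candidates $n = 40$ and $n = 80$ are not covered by Lemma~\ref{lemma:rule-out}, since for these dimensions $1/s_{n,n}$ happens to be a quadratic residue modulo every available $p$ in one of the two sign cases; one must therefore rule out the cases $\pm 1$ separately using two distinct primes. Dimension $40$ has just been handled by Proposition~\ref{proposition:no-dimension-40}, where $p = 283$ (dividing the factor $174611$ of $B_{20}$'s numerator) rules out the $+1$ case and $q = 524287 = 2^{19}-1$ rules out the $-1$ case. Dimension $80$ is then dispatched by invoking Lemma~\ref{lemma:rule-out-using-two-primes} with the prime pair $p = 1897170067619$ and $q = 79$: one checks $p \equiv 5 \pmod 8$, confirms that both $p$ and $q$ divide the numerator of $s_{20}$, and verifies that $1/s_{10,10}$ is a quadratic nonresidue modulo $p$ while $-1/s_{10,10}$ is a quadratic nonresidue modulo $q$.

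I expect the only real difficulty to be a computational one, namely the factorization of $\numer(s_{20})$ needed to locate the large witness $p = 1897170067619$; the coefficient itself is obtained from the $\LP$-polynomial formulas, and the Appendix's recursive algorithm makes this tractable. Once the witnesses are verified, the proposition follows by simply running through the six cases.
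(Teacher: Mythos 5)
Your proposal matches the paper's proof essentially verbatim: the same six candidate dimensions $n\in\{40,48,64,72,80,96\}$ extracted via $\wt(k)\leq 2$, the same witness primes from Table~\ref{table:witnesses} dispatching $n=48,64,72,96$ through Lemma~\ref{lemma:rule-out}, Proposition~\ref{proposition:no-dimension-40} for $n=40$, and Lemma~\ref{lemma:rule-out-using-two-primes} with $p=1897170067619$, $q=79$ for $n=80$. One small caution in carrying it out: the first bulleted hypothesis of Lemma~\ref{lemma:rule-out-using-two-primes} (``$2$ and $-2$ are quadratic nonresidues modulo $p$'') is vestigial and not actually used in the two-prime argument, and in fact $1897170067619\equiv 3\pmod 8$, so $-2$ \emph{is} a residue there; the operative conditions are only that $p,q$ divide $\numer(s_{2n})$ and that $1/s_{n,n}$ and $-1/s_{n,n}$ are nonresidues modulo $p$ and $q$ respectively, so do not be alarmed if your proposed ``$p\equiv 5\pmod 8$'' check fails.
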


The situation is quite different when $n = 128$.
\begin{proposition}\label{128}
The signature equation (Equation~\eqref{eq}) has a solution in dimension $128$.
\end{proposition}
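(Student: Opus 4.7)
The plan is to exhibit integers $x$ and $y$ satisfying the signature equation in dimension $128$. Setting $k = 16$, we must show that
$$
s_{16,16}\, x^2 + s_{32}\, y = \pm 1
$$
has a solution in $\Z \times \Z$. Since $16 = 2^4$ has $\wt(16)=1$, Proposition~\ref{proposition:2-adic} gives no $2$-adic obstruction; in particular $\nu_2(s_{32}) = 0$, so we at least do not lose at the prime $2$, which was the universal obstruction in Theorem~\ref{maintheorem:two-nonzero-bits}.

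First I would compute $s_{16}$, $s_{16,16}$, and $s_{32}$ as reduced rational numbers using Equations~\eqref{sk} and \eqref{sk2} together with the known values of the Bernoulli numbers $B_{16}$ and $B_{32}$ (the latter is responsible for the irregular prime $37$ that killed dimension $64$, so care is needed to avoid that residue class here). After clearing denominators the equation becomes an honest integer Diophantine equation $A x^2 + B y = \pm C$. For a fixed $x$, solvability in $y$ is equivalent to the congruence $A x^2 \equiv \pm C \pmod{B}$, which by CRT reduces to checking, for each prime power $p^e \,\|\, B$, that $\pm C/A$ is a quadratic residue modulo $p^e$.

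The expected main obstacle is purely computational: one must factor the numerator of $s_{32}$ (which contains $|B_{32}|$ and the Mersenne factor $2^{63}-1$) and verify the quadratic residue condition at each resulting prime. Because the rule-out criteria of Lemmas~\ref{lemma:rule-out} and \ref{lemma:rule-out-using-two-primes} do \emph{not} apply to $k=16$, one expects every local condition to be satisfied; Hensel's lemma then lifts the mod-$p$ solutions to mod-$p^e$, and CRT glues them into a global residue class for $x$. Having produced such an $x$, the corresponding $y = (\pm 1 - s_{16,16}\, x^2)/s_{32}$ is automatically an integer, and one finishes by exhibiting explicit small witnesses $(x,y)$ and verifying the identity by direct substitution.
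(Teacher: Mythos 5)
Your overall strategy is the same as the paper's: compute $s_{16,16}$ and $s_{32}$ via Equations~\eqref{sk} and \eqref{sk2}, clear denominators, and reduce solvability of $s_{16,16}x^2+s_{32}y=\pm 1$ to a family of quadratic-residue conditions modulo the prime factors of the resulting modulus $m$. The paper does exactly this, arriving at $x^2\equiv\pm a\pmod m$ with $m=5\cdot 73\cdot 127\cdot 337\cdot 92737\cdot 649657\cdot 1226592271\cdot N$ and checking the residue condition at each factor.

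There is, however, a genuine logical gap in how you close the argument. You pass from ``the rule-out criteria of Lemmas~\ref{lemma:rule-out} and \ref{lemma:rule-out-using-two-primes} do not apply to $k=16$'' to ``one expects every local condition to be satisfied,'' and then proceed as though they are. Those lemmas furnish \emph{sufficient} conditions for non-existence, keyed to a very particular test (is $\pm 2$ a non-residue modulo some prime dividing $\numer(s_{2n})$?). Their inapplicability tells you nothing about whether $\pm a$ is actually a square modulo each prime factor of $m$; a prime $p\mid m$ with $p\equiv\pm1\pmod 8$ is invisible to the rule-out lemmas yet could still make $\pm a$ a non-residue. Since the proposition asserts existence, the residue conditions must be verified by direct computation at every prime factor of $m$, which is what the paper's proof consists of. Two smaller points: $m$ turns out to be squarefree, so no Hensel lifting is required; and ``exhibiting explicit small witnesses $(x,y)$'' is not realistic here, since $m$ is on the order of $10^{59}$ --- the paper accordingly does not display a witness, only the local solvability.
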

\begin{proof}
In dimension $128$, the signature equation is
\begin{equation}
  \label{eq128}s_{16,16}x^2+s_{32}y=\pm 1,
\end{equation}
where, using Equations~\eqref{sk} and \eqref{sk2}, we compute 
\begin{align*}
s_{16,16}&=\df{-157 \cdot 311 \cdot 4759 \cdot 7841 \cdot 483239 \cdot 2044352341992497636810897021371}{3^{31} \cdot 5^{16} \cdot 7^{8} \cdot 11^{5} \cdot 13^{3} \cdot 17^{4} \cdot 19^{3} \cdot 23^{2} \cdot 29^{2} \cdot 31^{2} \cdot 37 \cdot 41 \cdot 43 \cdot 47 \cdot 53 \cdot 59 \cdot 61}, \\
s_{32}&=\df{73 \cdot 127 \cdot 337 \cdot 92737 \cdot 649657 \cdot 1226592271 \cdot N}{3^{31} \cdot 5^{15} \cdot 7^{8} \cdot 11^{5} \cdot 13^{4} \cdot 17^{4} \cdot 19^{3} \cdot 23^{2} \cdot 29^{2} \cdot 31^{2} \cdot 37 \cdot 41 \cdot 43 \cdot 47 \cdot 53 \cdot 59 \cdot 61}.
\end{align*}
where $N = 87057315354522179184989699791727$.  Equation~\eqref{eq128} can be simplified to
$$x^2\equiv \pm a\pmod{m}$$
where 
\begin{align*}
  a &= 98719348515711444512355076910350678632922916684640405411745,\\
  m &= 100500713568783890959555031913261799931478908397894537794155 \\
  &= 5 \cdot 73 \cdot 127 \cdot 337 \cdot 92737 \cdot 649657 \cdot 1226592271 \cdot N.
\end{align*}
Using the factorization of $m$, it can be verified that $a$ is a
quadratic residue for each of the factors.  Therefore
Equation~\eqref{128} has a solution.
\end{proof}
\begin{remark}
  The signature equation is only a \textit{necessary} condition for
  the existence of a rational projective plane. By rational surgery
  (\cite{MR440574}, \cite{MR646078}), there exists a 128-dimensional
  rational projective plane if and only if there exists integers
  $x$ and $y$ such that
  \begin{enumerate}[(i)]
  \item The signature equation \eqref{eq128} holds;
  \item\label{condition:congruence} The integers $x^2$ and $y$ are the
    Pontragin numbers of a genuine closed smooth manifold, i.e.,
    $x^2=\langle p_{16}^2(\tau_N), [N]\rangle$ and $y=\langle
    p_{32}(\tau_N), [N]\rangle$ for a 128-dimensional closed smooth
    manifold $N$.
  \end{enumerate}
  Condition~(\ref{condition:congruence}) is equivalent to a set of
  congruence relations on $x^2$ and $y$ (see \cite{MR3158765}), which
  requires a significant amount of calculations in dimension 128.
\end{remark}

\section{Ruling out some very high-dimensional examples}
\label{section:very-high-dimensional}

We apply Lemma~\ref{lemma:rule-out} and
Lemma~\ref{lemma:rule-out-using-two-primes} to rule out some higher
dimensional examples.
\begin{proposition}
  There is no $n$-dimensional rational projective plane for $128 < n < 256$.
\end{proposition}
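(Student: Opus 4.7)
By Theorem~\ref{maintheorem:two-nonzero-bits}, any $n$-dimensional rational projective plane with $128 < n < 256$ must satisfy $n = 8k$ for some $k = 2^a + 2^b$ with $16 < k < 32$. A direct enumeration of two-bit integers in this range gives exactly four possibilities: $k \in \{17, 18, 20, 24\}$, corresponding to dimensions $n \in \{136, 144, 160, 192\}$. The plan is to exhibit, for each of these four candidates, a witnessing prime (or pair of primes) that rules out the existence of integer solutions to the signature equation~\eqref{eq}.

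For $n = 136$ and $n = 160$, apply Lemma~\ref{lemma:rule-out} directly, using the primes $p = 101$ and $p = 10589$ listed in Table~\ref{table:witnesses}. In each case one verifies that $p \equiv 5 \pmod 8$, so that both $2$ and $-2$ are quadratic nonresidues modulo $p$, and then confirms, via Equations~\eqref{sk} and~\eqref{sk2} together with the prime factorizations of the relevant Bernoulli numerators, that $\nu_p(s_{2k}) > 0$ while $\nu_p(s_k) = 0$.

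The remaining cases are $n = 144$ (so $k = 18$) and $n = 192$ (so $k = 24$), which are not covered by the table. For each, I would first attempt the one-prime criterion of Lemma~\ref{lemma:rule-out}: enumerate the prime divisors of $\numer(s_{2k})$ and test for a prime $p \equiv 5 \pmod 8$ satisfying $\nu_p(s_k) = 0$ and such that $\pm 1/s_{k,k}$ is a nonresidue modulo $p$. If no single prime handles both signs in Equation~\eqref{eq}, fall back to Lemma~\ref{lemma:rule-out-using-two-primes} and select a pair of primes dividing $\numer(s_{2k})$ so that one kills the $+1$ case and the other kills the $-1$ case.

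The principal obstacle is computational rather than conceptual: the Bernoulli numbers $B_{72}$ and $B_{96}$ that enter $s_{2k}$ for $k = 18$ and $k = 24$ have very large numerators, and locating prime factors with the required congruence and residue properties demands nontrivial number-theoretic software. Once candidate primes are in hand, the remaining verification is a finite Legendre-symbol calculation, after which all four candidate dimensions are eliminated and the proposition follows.
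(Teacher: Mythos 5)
Your approach is exactly the paper's: reduce to $n\in\{136,144,160,192\}$ via Theorem~\ref{maintheorem:two-nonzero-bits} (equivalently Proposition~\ref{proposition:2-adic}), dispatch $n=136$ and $n=160$ with the witnesses $101$ and $10589$ from Table~\ref{table:witnesses} via Lemma~\ref{lemma:rule-out}, and then handle $n=144$ and $n=192$ by a prime-witness argument. Up to that point the proposal is correct and matches the paper.

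The gap is that for the two cases not covered by the table you stop at a plan rather than a proof. Saying ``first try a single prime, else fall back to two primes'' does not establish that any suitable witnesses exist; in principle the enumeration of prime divisors of $\numer(s_{2k})$ could come up empty, in which case the proposition would not follow. The paper closes this by actually exhibiting the witnesses and invoking Lemma~\ref{lemma:rule-out-using-two-primes}: for $n=144$, $p = 1872341908760688976794226499636304357567811$ and $q = 228479$; for $n=192$, $p = 4155593423131$ and $q = 191$. Until you produce concrete primes (from the factorizations of $\numer(B_{72})$ and $\numer(B_{96})$) and verify the required congruence and Legendre-symbol conditions, the argument for $n=144$ and $n=192$ is incomplete, and with it the proposition. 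A minor secondary point: your description of the ``one-prime criterion'' asks that $\pm 1/s_{k,k}$ be nonresidues modulo $p$, but Lemma~\ref{lemma:rule-out} as stated only requires $p\equiv 5\pmod 8$, $\nu_p(s_{2n})>0$, and $\nu_p(s_n)=0$; the nonresidue property of $\pm 1/s_{n,n}$ is then a consequence, not an additional hypothesis.
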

\begin{proof}
  By Proposition~\ref{proposition:2-adic}, we have $n \in \{136, 144,
  160, 192\}$.  But $n \neq 136$ and $n \neq 160$ by the witnesses in
  Table~\ref{table:witnesses}.  We can rule out a 144-dimensional
  rational projective plane by applying
  Lemma~\ref{lemma:rule-out-using-two-primes} with
  \begin{align*}
    p &= 1872341908760688976794226499636304357567811, \\
    q &= 228479.
  \end{align*}
  And we can rule out a 192-dimensional rational projective plane by choosing
  \begin{align*}
    p &= 4155593423131, \\
    q &= 191.
  \end{align*}
\end{proof}

We can also direct our attention to even higher dimensions.  When is
there a $2^{k+3}$-dimensional rational projective plane?  The fact that
divisors of $2^{2^k - 1} - 1$ are rarely (never?) $5 \bmod 8$ tells us
not to look for such primes among the divisors of the Mersenne factor.
The desiderata of Lemma~\ref{lemma:rule-out} are satisfied by finding
a prime $p$ so that
\begin{itemize}
\item $p \equiv 5 \pmod 8$,
\item $p > 4 \cdot 2^k$,
\item $p$ divides the numerator of $B_{4 \cdot 2^k}$,
\item $p$ does not divide the numerator of $B_{2 \cdot 2^k}$.
\end{itemize}

For example, the prime $p = 502261$ is $5 \bmod 8$ and divides the
numerator of $B_{4 \cdot 2^{11}}$ but not $B_{2 \cdot 2^{11}}$, which
rules out a rational projective plane in dimension $2^{14} = 4096$;
similarly, the prime $p = 69399493$ is $5 \bmod 8$, divides the
numerator of $B_{4 \cdot 2^21}$ but not $B_{2 \cdot 2^21}$, which
rules out a projective plane in dimension $2^{24}$.  These
calculations are possible due to tables of irregular primes produced
by Joe P.~Buhler and David Harvey \cite{MR2813369}.

\section{Connectedness of the 32 dimensional rational projective planes}
\label{section:connectivity}

There exist 32-dimensional rational projective planes.  In light of
this, we consider the torsion structure of such closed smooth
manifolds. We show that a 32-dimensional rational projective plane
does not admit a Spin structure.
\begin{lemma}\label{lemma:32spin}
There does not exist a simply-connected closed Spin manifold $M$ in dimension 32 such that
\[
H^*(M;\Q)\cong
  \begin{cases}
    \Q & \mbox{if $\ast =0, 16, 32$, and} \\
    0  & \mbox{otherwise}.
  \end{cases}
\]
\end{lemma}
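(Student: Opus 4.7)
The plan is to use the integrality of the $\hat{A}$-genus for a closed Spin manifold---a consequence of the Atiyah--Singer index theorem, since $\hat{A}(M)$ equals the index of the Dirac operator---and to combine this with the signature equation to derive an arithmetic contradiction in the spirit of Lemma~\ref{lemma:rule-out}.

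First, if $M^{32}$ is closed and Spin then $\hat{A}(M) = \langle \hat{A}_8(p(\tau_M)), [M] \rangle$ is an integer. The rational cohomology hypothesis forces $p_i(\tau_M) = 0 \in H^{4i}(M;\Q)$ for $i \notin \{4,8\}$, so writing $p_4(\tau_M) = x\alpha$ and $p_8(\tau_M) = y\alpha^2$ for a generator $\alpha \in H^{16}(M;\Z)$ with $\langle \alpha \smile \alpha, [M]\rangle = \pm 1$ and integers $x, y$, the only monomials of $\hat{A}_8$ contributing to $\hat{A}(M)$ are $p_4^2$ and $p_8$:
\[
\hat{A}(M) = \hat{a}_{4,4}\, x^2 + \hat{a}_8\, y,
\]
where $\hat{a}_{4,4}, \hat{a}_8 \in \Q$ are the coefficients of $p_4^2$ and $p_8$ in $\hat{A}_8$; these can be extracted from the generating series $\prod_i (x_i/2)/\sinh(x_i/2)$ by a recursion of the same shape as the $\LP$-polynomial algorithm in the appendix.

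Next, I would combine this identity with the signature equation for a rational projective plane, $s_{4,4}\,x^2 + s_8\,y = \pm 1$. Eliminating $y$ by scaling and subtracting yields
\[
(s_8\,\hat{a}_{4,4} - s_{4,4}\,\hat{a}_8)\, x^2 = s_8\cdot \hat{A}(M) \mp \hat{a}_8,
\]
which is a constraint on the single integer $x$ once $\hat{A}(M) \in \Z$ is enforced. It now suffices to produce a prime $p$ for which this equation has no integer solution---concretely, a prime $p$ with $\nu_p(s_8) > \nu_p(\hat{a}_8)$ while $\nu_p(s_8\,\hat{a}_{4,4} - s_{4,4}\,\hat{a}_8)$ is suitably controlled, so that reduction modulo $p$ either makes one side non-$p$-integral or forces a quadratic nonresidue condition on $x^2 \pmod p$ in the manner of Lemma~\ref{lemma:rule-out-using-two-primes}.

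The main obstacle is the explicit calculation of $\hat{a}_{4,4}$ and $\hat{a}_8$ in dimension $32$ and the subsequent search for a witness prime; both are computer-algebra tasks of the same flavor as the $\LP$-polynomial computations of Section~\ref{section:up-to-dimension-128}, with $\hat{A}$ playing the role of $\LP$. Once the two coefficients are in hand, verifying the nonsolvability modulo the chosen $p$ is mechanical.
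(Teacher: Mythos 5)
Your plan is in the right \emph{family} of arguments—combine a Spin integrality constraint with the signature equation and look for a modular obstruction—but it uses a strictly weaker integrality input than the proof actually requires, and as a result the key contradiction does not go through.

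The only Spin integrality you invoke is $\hat{A}(M)\in\Z$, i.e.\ the single condition coming from the index of the Dirac operator. The paper instead invokes Stong's characterization of the image of $\Omega^{Spin}_{8k}/\mathrm{tor}\to H_{8k}(BSpin;\Q)$: a class $x$ lies in the image if and only if \emph{all} the KO-characteristic numbers $\langle \Z[e_1,e_2,\ldots]\cdot\hat{A},x\rangle$ are integers, not merely $\langle \hat{A},x\rangle$. Those extra conditions (the basis elements $e_I\cdot\hat{A}$ expanded in $p_4,p_8$) are exactly what produce the congruences the proof needs—in particular $e_5\cdot\hat{A}=-\tfrac{43}{2520}p_8$ forces $2520\mid y$, and this divisibility is what makes the signature equation collapse to $x^2\equiv\pm 11010868155\pmod{298665962280}$, which then fails modulo $8$.

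Your single constraint does not deliver that. Concretely, write $p_4=x\alpha$, $p_8=y\alpha^2$. Reducing $\hat{A}(M)\in\Z$ modulo $8$ gives $-x^2-4y\equiv 0\pmod 8$, i.e.\ $x^2\equiv 4y\pmod 8$, while the signature equation $-444721\,x^2+118518239\,y=\pm 162820783125$ reduces mod $8$ to $x^2+y\equiv\mp 5\pmod 8$. Substituting, $5y\equiv\mp 5\pmod 8$, so $y\equiv\mp 1\pmod 8$ (odd) and $x^2\equiv 4\pmod 8$—which is perfectly realizable ($x\equiv 2\pmod 4$). So the $2$-adic obstruction evaporates with only $\hat{A}$-integrality in hand. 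A quick check at $p=3,5,7,11$ shows the same phenomenon: the mod-$p$ constraint from $\hat{A}(M)\in\Z$ is \emph{compatible} with, in fact essentially equivalent to, the mod-$p$ reduction of the signature equation, rather than giving an independent condition. So the ``find a witness prime'' step, which you leave open, is not a mechanical calculation waiting to be done—it is a real gap, and with your input data alone it appears to have no solution.

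To repair the argument you should replace ``$\hat{A}$-genus integrality'' by the full Stong lattice of Spin characteristic numbers (Theorem 5.3 in the paper), expand the $e_i\cdot\hat{A}$ classes in terms of $p_4$ and $p_8$ in dimension $32$, and extract the resulting congruences on $x^2$ and $y$—most importantly $2520\mid y$. Once you have that, the mod-$8$ computation you are aiming for does yield the contradiction.
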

The key application of this theorem is to show that a 32-dimensional
rational projective plane cannot be very highly (integrally) connected.
\begin{repmaintheorem}{maintheorem:not-two-connected}
Any 32-dimensional rational projective plane is not 2-connected.
\end{repmaintheorem}
\begin{proof}
  Suppose $M$ is a 32-dimensional rational projective plane. By Lemma~\ref{lemma:32spin}, $M$ is not Spin.  But the
  Stiefel-Whitney class $w_2(M)$ is the obstruction to a Spin
  structure, so $w_2(M) \neq 0$, and so $H^2(M,\Z_2) \neq 0$, and so
  by universal coefficients, $H^2(M,\Z) \neq 0$.
\end{proof}

\begin{proof}[Proof of Lemma~\ref{lemma:32spin}]
If there exists such a simply-connected closed smooth manifold $M^{32}$, the Hizebruch signature theorem implies that  
\[
s_{4,4}\langle p_{4}^2(\tau_M),[M]\rangle+s_{8}\langle p_{8}(\tau_M),[M]\rangle=\pm 1.
\]
As in Section~\ref{section:rational-planes}, if $\alpha\in H^{16}(M;\Z)$ is the generator such that $\langle\alpha\smile \alpha, [M]\rangle=\pm 1$, then $p_{4}(\tau_M)=x\alpha$ and $p_8(\tau_M)=y\alpha^2$ for integers $x$ and $y$, then we may write $\langle p_{4}^2(\tau_M),[M]\rangle=x^2$ and $\langle p_{8}(\tau_M),[M]\rangle=y$  to have the diophantine equation
\begin{equation}
\label{eq32}
s_{4,4}x^2+s_{8}y=\pm 1
\end{equation}
Moreover, the integers $x^2$ and $y$ must be the Pontragin numbers of
a genuine Spin manifold of dimension 32. The following theorem of
Stong characterized the congruence relations that determines the set
of all possible Pontryagin numbers of a Spin manifold.

\begin{theorem}[\cite{MR192516}] \label{hs} For $8k$-dimensional closed Spin manifolds, the stable tangent bundle $\tau_N:N\to BSpin$ induces a homomorphism
$$\tau: \Omega^{Spin}_{8k}/\mathrm{tor}\rightarrow H_{8k}(BSpin;\Q).$$
The image of the homomorphism $\tau$ is a lattice consisting exactly the elements
$x\in H_{8k}(BSpin;\Q)$ such that
\begin{equation}\label{stong}
    \langle \Z[e_1(\gamma), e_2(\gamma),\cdots ]\cdot \hat{A}(p_i(\gamma)), x\rangle\in\Z.
\end{equation}
\end{theorem}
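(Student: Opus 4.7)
The plan is to combine the signature-based Diophantine equation from Section~\ref{section:rational-planes} with the extra integrality constraints imposed on Pontryagin numbers by the Spin hypothesis, and then force an arithmetic contradiction at a well-chosen prime.

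First, I would repeat the setup from the proof of Theorem~\ref{maintheorem:two-nonzero-bits} in dimension $8k=32$, so $k=4$. If such an $M$ exists, then writing $p_4(\tau_M)=x\alpha$ and $p_8(\tau_M)=y\alpha^2$ with $\alpha$ a generator of $H^{16}(M;\Z)$ normalized by $\langle\alpha\smile\alpha,[M]\rangle=\pm 1$, Hirzebruch's signature theorem forces
\begin{equation*}
s_{4,4}\,x^2+s_8\,y=\pm 1,
\end{equation*}
with integer unknowns $x,y$. Rationally, every Pontryagin number of $M$ not built out of $p_4^2$ and $p_8$ vanishes, because $H^{4i}(M;\Q)=0$ for $i\notin\{4,8\}$. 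So the only Pontryagin numbers carrying information are exactly $x^2$ and $y$.

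Next I would apply Stong's Theorem~\ref{hs}. Because $M$ is Spin of dimension $32=8\cdot 4$, every characteristic number of the form $\langle P(e_1(\gamma),e_2(\gamma),\dots)\cdot\hat A(p),[M]\rangle$ must be an integer, where $P$ ranges over integer polynomials in the KO-theoretic exponential classes $e_i$. Since $p_i(\tau_M)=0$ rationally except for $i=4,8$, each such characteristic number reduces to a $\Q$-linear combination of $x^2$ and $y$. In particular, $\hat A[M]\in\Z$ already gives one nontrivial linear relation between $x^2$ and $y$ with explicit rational coefficients computable from the $\hat A$-polynomial in degree 32; taking $P=1,\, e_1,\, e_1^2,\, e_2,\dots$ produces a whole lattice of linear integrality conditions on the pair $(x^2,y)$. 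I would compute enough of these (at most two independent conditions suffice in our two-variable situation) using the $\hat A$- and $e_i$-polynomials to pin $(x^2,y)$ down modulo a suitable integer.

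Then I would combine the resulting congruences with the signature equation $s_{4,4}x^2+s_8 y=\pm 1$. The strategy mirrors the prime-witness technique of Section~\ref{section:up-to-dimension-128}: pick a prime $p$ appearing in the denominators of $\hat A$ (in degree 32) but well-controlled in the denominators of $s_{4,4}$ and $s_8$, and use the Hattori--Stong congruences to eliminate $y$ from the signature equation. This should leave a univariate condition of the form $x^2\equiv c\pmod p$ or an outright contradiction $0\equiv\pm 1\pmod p$. I would then check, by direct computation (using the explicit tables of $s_{k,k}$, $s_{2k}$, and the $\hat A$-coefficients produced by the algorithm of the Appendix), that $c$ is a quadratic nonresidue (both cases of the sign), yielding the desired nonexistence.

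The main obstacle is arithmetic rather than conceptual: one must identify which element(s) $P$ of the lattice in Theorem~\ref{hs} produce the sharpest congruence, and then find a prime $p$ at which that congruence is incompatible with the signature equation. I expect that the first few KO-classes ($1$, $e_1$, and possibly $e_1^2$ or $e_2$), together with the $2$-adic refinement already implicit in the Spin condition, are enough; the computation is feasible because everything ultimately reduces to two variables $(x^2,y)$ and the relevant polynomial coefficients are explicit.
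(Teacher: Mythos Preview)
Your proposal does not address the stated theorem. Theorem~\ref{hs} is Stong's characterization of the image of $\Omega^{Spin}_{8k}/\mathrm{tor}$ inside $H_{8k}(BSpin;\Q)$; the paper does not prove it but merely quotes it from \cite{MR192516} as an input. A proof of Theorem~\ref{hs} would require the structure theory of $MSpin$ (or equivalently the Anderson--Brown--Peterson splitting together with the Hattori--Stong viewpoint), none of which appears in your outline.

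What you have actually sketched is a proof of Lemma~\ref{lemma:32spin}, \emph{using} Theorem~\ref{hs} as a black box. Read that way, your plan coincides with the paper's own argument: set up the signature equation $s_{4,4}x^2+s_8y=\pm 1$, extract from Corollary~\ref{spin manifold} a finite list of integrality constraints on $(x^2,y)$, and look for an arithmetic obstruction. Two points of comparison are worth noting. First, the paper does not get away with just $1$, $e_1$, $e_1^2$, $e_2$; it tabulates the full basis of $\Z[e_1,\dots]\cdot\hat A$ in degree~$32$ (Tables~\ref{table:euler-class-expressions} and~\ref{table:basis-for-ahat}) and reduces the resulting system to a short list of divisibility conditions, the decisive one being $2520\mid y$. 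Second, the contradiction is found $2$-adically (a quadratic nonresidue modulo~$8$), not at an odd prime in the $\hat A$-denominators as you anticipate. Your expectation that ``the first few KO-classes \dots are enough'' is therefore optimistic; you should be prepared to compute the whole degree-$32$ slice before a usable congruence emerges.
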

We explain the notations in the theorem. The $KO$-theoretic Pontryagin
character $e_i(\gamma)$ is defined as follows: the total Pontryagin
class of the universal vector bundle $\gamma$ over $BSpin$ can be
formally expressed as $p(\gamma)=\Pi(1+x_j^2)$ by the splitting
principle. The class $e_i(\gamma)\in H^*(BSpin;\Q)$ is the $i$-th
elementary symmetric polynomial of the variables $e^{x_j}+e^{-x_j}-2$,
i.e.,
\[
e_i(\gamma)=\sigma_i\left(e^{x_1}+e^{-x_1}-2, e^{x_2}+e^{-x_2}-2, \cdots\right).
\]
Here, $\hat{A}(p_i(\gamma))$ denotes the total $\hat{A}$-polynomial of
the Pontryagin classes $p_i(\gamma)$'s.

Since the Pontryagin class $p_i(\gamma)$ is exactly the $i$-th
elementary symmetric polynomial in the variables $x_j^2$'s, each class
$e_i(\gamma)$, which can be expanded as a symmetric polynomials of the
variables $x_j^2$'s, can be written as a polynomial in the
$p_i(\gamma)$'s. Therefore Equation~\eqref{stong} can be expressed as
a set of congruence relations on the Pontryagin numbers $\langle
p_I(\gamma), x \rangle$ over the partitions $I$ of $2k$.

Since for any closed Spin manifold $N$, $\langle
p_I(\tau_N),[N]\rangle=\langle p_I(\gamma),\tau_*[N]\rangle=\langle
p_I(\gamma), x\rangle$, the relations on $\langle p_I(\gamma),
x\rangle$ in \eqref{stong} determine a set of integrality conditions
on the Pontryagin numbers $\langle p_I(\tau_N),[N]\rangle$. Therefore
Theorem~\ref{hs} implies the following Lemma, which characterizes all
the possible Pontryagin numbers of a closed Spin manifold.

\begin{corollary}\label{spin manifold}
If $M$ is an $8k$-dimensional closed Spin manifold, then 
\begin{equation}\label{congruence spin}
\langle \Z[e_1(\tau_M), e_2(\tau_M)\cdots ]\cdot  \hat{A}(p_i(\tau_M)), \ [M]\rangle\in\Z,
\end{equation}
where each class $e_i (\tau_M)\in H^*(M;\Q)$ is the pull back of $e_i(\gamma)\in H^*(BSpin;\Q)$ by the stable tangent bundle $\tau_N:N\to BSpin$. \\
\end{corollary}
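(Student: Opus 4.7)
The plan is to derive Corollary~\ref{spin manifold} as an immediate consequence of Stong's theorem (Theorem~\ref{hs}) by applying naturality of the Kronecker pairing between cohomology and homology. For any closed Spin manifold $M^{8k}$, the stable tangent bundle classifies a map $\tau_M\colon M \to BSpin$, and the pushforward $(\tau_M)_*[M] \in H_{8k}(BSpin;\Q)$ is, essentially by definition, the image of $[M] \in \Omega^{Spin}_{8k}/\mathrm{tor}$ under the homomorphism $\tau$ appearing in Theorem~\ref{hs}.

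First, I would apply Theorem~\ref{hs} with $x = (\tau_M)_*[M]$, which directly yields
$$\langle \Z[e_1(\gamma), e_2(\gamma), \ldots]\cdot \hat{A}(p_i(\gamma)), \, (\tau_M)_*[M]\rangle \subseteq \Z.$$
Next, I would invoke the standard naturality identity: for any cohomology class $c \in H^*(BSpin;\Q)$,
$$\langle c, (\tau_M)_*[M]\rangle = \langle \tau_M^* c, [M]\rangle.$$
Because $p_i(\tau_M) = \tau_M^* p_i(\gamma)$, and because $e_i(\tau_M) \in H^*(M;\Q)$ is defined precisely as the pullback $\tau_M^* e_i(\gamma)$, and because $\tau_M^*$ is a ring homomorphism that commutes with forming polynomials, every class in $\Z[e_1(\gamma), e_2(\gamma), \ldots]\cdot \hat{A}(p_i(\gamma))$ pulls back along $\tau_M$ to the corresponding polynomial expression in $e_i(\tau_M)$ and $\hat{A}(p_i(\tau_M))$. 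Combining the two displays then gives the integrality relation \eqref{congruence spin}.

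All the substantive content sits inside Stong's theorem; the corollary is just a formal transfer of its conclusion from the classifying space down to $M$. As such there is no real obstacle — the only point to verify is that pulling back through $\tau_M^*$ commutes with forming the polynomial $\Z[e_1, e_2,\ldots]\cdot \hat{A}(p_i)$, and this is automatic since both the $e_i$ and the $\hat{A}$-class are defined universally on $BSpin$ and are pulled back along $\tau_M$.
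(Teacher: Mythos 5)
Your proof is correct and follows essentially the same line as the paper: apply Stong's theorem to $x = (\tau_M)_*[M]$, then transfer the integrality statement from $BSpin$ down to $M$ via naturality of the Kronecker pairing, $\langle c, (\tau_M)_*[M]\rangle = \langle \tau_M^* c, [M]\rangle$, together with the fact that $\tau_M^*$ respects the polynomial structure. The paper states the same naturality identity for the Pontryagin numbers and concludes identically; you merely spell out the ring-homomorphism detail a bit more explicitly.
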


In our case, a rational projective plane $M^{32}$ has only nonzero
Pontryagin classes $p_4\in H^{16}(M;\Q)$ and $p_8\in
H^{32}(M;\Q)$. For dimension $32$, the expressions of the $e_i$
classes in terms of $p_4$ and $p_8$ was calculated in
\cite{MR3158765}.  This data is recalled in Table~\ref{table:euler-class-expressions}.

\begin{table}
\caption{Expression for $e_i$ in terms of $p_4$ and $p_8$ from \cite{MR3158765}.}
\label{table:euler-class-expressions}

\begin{align*}
  e_1&=-\frac{1}{5040}p_4+\frac{1}{2615348736000}p_4^2-\frac{1}{1307674368000}p_8\\
  e_2&=\frac{1}{40}p_4+\frac{3119}{435891456000}p_4^2+\frac{5461}{217945728000}p_8,\\
  e_1e_1&=\frac{1}{25401600}p_4^2 \\
  e_3&=-\frac{1}{3}p_4 + \frac{19}{39916800}p_4^2-\frac{31}{2851200}p_8,  \quad     e_1e_2=-\frac{1}{201600}p_4^2,  \quad  e_1^3=0\\
  e_4&=p_4 + \frac{1}{1209600}p_4^2 + \frac{457}{604800}p_8,  \quad   e_1e_3=\frac{1}{15120}p_4^2,  \quad    e_2e_2=\frac{1}{1600} p_4^2\\
  e_5&=-\frac{43}{2520}p_8, \quad  e_1e_4=-\frac{1}{5040}p_4^2,    \quad  e_2e_3=-\frac{1}{120}p_4^2, \\
  e_6&=\frac{29}{180}p_8,  \quad   e_2e_4=\frac{1}{40}p_4^2,  \quad  e_3e_3=\frac{1}{9}p_4^2, \quad  e_1e_5=0\\
  e_7&=-\frac{2}{3}p_8,  \quad  e_3e_4=-\frac{1}{3}p_4^2, \quad   e_2e_5=0, \quad  e_1e_6=0 \\
  e_8&=p_8,  \quad  e_4e_4=p_4^2,  \quad  e_3e_5=0,  \quad  e_2e_6=0,  \quad  e_1e_7=0   
\end{align*}
\end{table}

We are now in a position to calculate the $\hat{A}$-genus. For each partition $I$ of $k$, let $a_I$ denote the coefficient of the $I$-th Pontryagin class $p_I$ in the $k$-th $ \hat{A}$-polynomial, then similar to the derivation of the coefficients in the $\LP$-genus through Equation~\eqref{sk} and Equation~\eqref{sk2}, we have corresponding formulas
\begin{align*}\label{ak}
a_k&=-\df{|B_{2k}|}{2(2k)!}, \\
a_{k, k}&=\frac{1}{2}(a_k^2-a_{2k}).
\end{align*}
The total $\hat{A}$-polynomial up to dimension 32 in terms of $p_4$ and $p_8$ is
\begin{equation}\label{A}
\hat{A}=1-\df{1}{2419200}p_4+\df{14527}{85364982743040000} p_4^2- \df{3617}{21341245685760000}p_8.
\end{equation}

\begin{table}
\caption{Basis of $\Z[e_1, e_2\cdots ]\cdot  \hat{A}$ in dimension~32.}
\label{table:basis-for-ahat}
\begin{align*}
  1\cdot \hat{A}&= \frac{14527}{85364982743040000} p_4^2- \frac{3617}{21341245685760000}p_8 \\                   
  e_1\cdot \hat{A}&= \frac{431}{5230697472000} {p_4}^2-\frac{1}{1307674368000}{p_8} \\                     
  e_2\cdot \hat{A}&=-\frac{2771}{871782912000}p_4^2+\frac{5461}{217945728000}p_8
  ,\ \ \ \ \ \ \ e_1e_1\cdot \hat{A}=\frac{1}{25401600}p_4^2 \\                    
  e_3\cdot \hat{A}&=\frac{7}{11404800}p_4^2-\frac{31}{2851200}p_8,\quad \quad \ e_1e_2\cdot \hat{A}=-\frac{1}{201600}p_4^2 \\                      
  e_4\cdot \hat{A}&=\frac{1}{2419200}p_4^2+\frac{457}{604800}p_8, \quad e_1e_3\cdot \hat{A}=\frac{1}{15120}p_4^2, \quad e_2e_2\cdot \hat{A}=\frac{1}
  {1600}p_4^2\\                   
  e_5\cdot \hat{A}&=-\frac{43}{2520}p_8, \quad e_1e_4\cdot \hat{A}=-\frac{1}{5040}p_4^2, \quad e_2e_3\cdot \hat{A}=-\frac{1}{120}p_4^2 \\                     
  e_6\cdot \hat{A}&= \frac{29}{180}p_8,\quad  e_2e_4\cdot \hat{A}=\frac{1}{40}p_4^2, \quad e_3e_3\cdot \hat{A}=\frac{1}{9}p_4^2 \\                        
  e_7\cdot \hat{A}&=-\frac{2}{3}p_8, \quad \quad e_3e_4\cdot \hat{A}=-\frac{1}{3}p_4^2 \\
  e_8\cdot \hat{A}&= p_8, \quad \quad e_4e_4\cdot \hat{A}= p_4^2.\nonumber
\end{align*}
\end{table}

The basis of $\Z[e_1, e_2\cdots ]\cdot  \hat{A}$ in dimension~32 is displayed in Table~\ref{table:basis-for-ahat}.
Corollary \ref{spin manifold} says each of the above basis class should satisfy $\langle -, [M]\rangle\in \Z$. Let $\langle p_4^2, [M]\rangle=x^2$ and $\langle p_8, [M]\rangle=y$, the integrality conditions can then be simplified to the following congruence relations
\begin{align*}
85364982743040000 &\divides 14527 x^2-14468 y\\
5230697472000 &\divides 431 x^2-4 y\\
871782912000 &\divides -2771 x^2 + 21844 y\\
11404800 &\divides 7 x^2 - 124 y\\
2419200 &\divides x^2 + 1828 y\\
25401600 &\divides x^2\\
2520 &\divides y
  \end{align*}
Recall that the signature equation says
\[
  -444721 x^2+118518239 y=\pm162820783125.
\]
We will show that if the above integrality conditions are satisfied,
the signature equation does not have any solution. Since $2520
\divides y$, the signature equation is equivalent to the following
quadratic residue problem\begin{align}\label{qr}
  -444721x^2 &\equiv\pm162820783125\pmod{(118518239)\cdot(2520)}\\
  x^2 &\equiv\pm(-444721)^{-1}\cdot162820783125\pmod{298665962280} \nonumber\\
  x^2 &\equiv\pm11010868155\pmod{298665962280}.\nonumber
\end{align}
The modulus has the prime factorization $298665962280=2^3\cdot
3^2\cdot 5^1\cdot 7^2\cdot31\cdot151\cdot3617$, since
$11010868155\equiv 3\pmod{8}$ and $-11010868155\equiv 5\pmod{8}$, both
$11010868155$ and $-11010868155$ are quadratic nonresidue mod
$2$. Therefore, Equation~\eqref{qr} has no solution. This shows that
there does not exist a simply-connected closed Spin manifold $M^{32}$
such that $H^*(M;\Q)=\Q[x]/\langle x^3\rangle$. We conclude that none
of the 32-dimensional rational projective planes admit a Spin
structure.
\end{proof}

\section{Rational projective spaces}
\label{section:octonionic-examples}
 
The construction of the octonionic projective plane $\OP^2$ does not
generalize to any projective space $\OP^m$ for $m>2$. If
$H^*(X;\Z)=\Z[x]/\langle x^{m+1}\rangle$ with $m>2$, then $|x| \in \{ 2, 4
\}$ (\cite{MR1867354}). We study the existence of ``rational octonionic
spaces.''  In other words, we ask whether there exist a closed smooth
manifold $M^{8k}$ such that $H^*(M;\Q)=\Q[x]/\langle x^{m+1}\rangle,
|x|=8$ for $m>2$.  We apply rational surgery to show the existence of
rational $\OP^{m}$ for $m$ odd.

Our main technical tool is the following result of Barge and Sullivan.

\begin{theorem}[Barge \cite{MR440574}, Sullivan \cite{MR646078}]\label{bs}
  Let $X$ be an $n=4k$-dimensional simply-connected,
  $\mathbb{Q}$-local, $\mathbb{Q}$-Poincar\'{e} complex, where
  $k\neq1$. There exists a simply-connected $4k$-dimensional, closed,
  smooth manifold $M$, and a $\mathbb{Q}$-homotopy equivalence $f:
  M\to X$ if and only if there exist cohomology classes $p_i \in
  H^{4i}(X;\mathbb{Q})$ for $i=1, \ldots, k$, and a fundamental class
  $\mu \in H_{4k}(X;\mathbb{Q})\cong\Q$ such that

\textup{(i)} the pairing of the $k$th $L$-polynomial of $p_i$'s and $\mu$ is
  equal to the signature of $X$, i.e., $\langle L_k(p_1,\ldots, p_k),
  \mu\rangle=\sigma(X)$;

\textup{(ii)} the intersection form $\lambda: H^{2k}(X;\Q)\times
  H^{2k}(X;\Q)\rightarrow \Q$ defined as
  $\langle\cdot\smile\cdot,\mu\rangle$ is isomorphic to a direct sum
  of copies of $\langle1\rangle$'s and $\langle-1\rangle$'s; and

\textup{(iii)} the pairings $\langle p_I, \mu\rangle=\langle p_{i_1}\cdots
  p_{i_r}, \mu\rangle$ over all the partitions $I=(i_1,\ldots,i_r)$ of
  $k$ form a set of Pontryagin numbers of a genuine closed smooth
  manifold, i.e., there exists a $4k$-dimensional closed smooth
  manifold $N$ such that $$\langle p_I(\tau_N),[N]\rangle=\langle p_I,
  \mu\rangle$$ for all partitions $I$ of $k$.

If the choice of $p_i$'s and $\mu$ satisfies all the conditions above,
surgery theory will construct a $\Q$-homotopy equivalence $f: M\to X$
such that $f_*[M]=\mu$ and $f^*(p_i)=p_i(\tau_M)$, where $p_i(\tau_M)$
is the $i$-th Pontryagin class of the tangent bundle of $M$. As a
consequence, the Pontryagin numbers $p_I[M]=\langle p_I, \mu\rangle$
for all partitions $I$ of $k$.
\end{theorem}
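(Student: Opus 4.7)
The plan is to prove both directions separately. The necessity direction is a bookkeeping exercise, while the sufficiency direction is the substantive content and is proved via rational surgery theory (the ``Sullivan--Barge realization'' program).

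For necessity ($\Rightarrow$), suppose $f\colon M\to X$ is a $\Q$-homotopy equivalence with $M$ a closed smooth $4k$-manifold. Since $f$ is a $\Q$-equivalence it induces an isomorphism on $H^*(-;\Q)$, so I would set $p_i := (f^*)^{-1} p_i(\tau_M)\in H^{4i}(X;\Q)$ and $\mu := f_*[M]$. Condition~(i) is then the Hirzebruch signature theorem for $M$ transported along $f$, using that $\sigma(M)=\sigma(X)$ because $f$ preserves the intersection form rationally. Condition~(iii) is trivial with $N=M$. Condition~(ii) requires a small argument: the integral intersection form on $H^{2k}(M;\Z)/\mathrm{tor}$ is unimodular by Poincar\'e duality, and a unimodular symmetric bilinear form over $\Z$, tensored with $\Q$, is a sum of $\langle\pm 1\rangle$'s, as one sees from the local classification of unimodular forms over each $\Z_p$ combined with Hasse--Minkowski.

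For sufficiency ($\Leftarrow$)---the heart of the theorem---the strategy is rational surgery. The first step is to manufacture a degree-one normal map. Using Sullivan's result that the rational Pontryagin character $\mathrm{ph}\colon KO(X)\otimes\Q\to\bigoplus H^{4*}(X;\Q)$ is an isomorphism for a finite complex $X$, the prescribed classes $p_i\in H^{4i}(X;\Q)$ determine a rational virtual bundle $\xi$ over $X$ that rationally reduces the Spivak normal fibration. Next, the manifold $N$ supplied by~(iii) has exactly the Pontryagin numbers $\langle p_I,\mu\rangle$, so its bordism class in $\Omega^{SO}_{4k}(X;\xi)\otimes\Q$ can be represented by a degree-one normal map $F\colon N\to X$ covered by a bundle map $\nu_N\to\xi$.

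The second step is to perform surgery on $F$ to make it a $\Q$-homology equivalence. Surgery below the middle dimension, which is unobstructed rationally and uses $1$-connectivity, makes $F$ highly connected. The middle-dimensional surgery obstruction lives in $L_{4k}(\Z)\otimes\Q\cong\Q$, detected by $\tfrac{1}{8}(\sigma(N)-\sigma(X))$; condition~(i) together with Hirzebruch's theorem applied to $N$ (using $F^*p_i=p_i(\tau_N)$ rationally) makes this obstruction vanish. The remaining middle-dimensional kernel $K^{2k}$ carries a nonsingular symmetric form, and condition~(ii)---that the form on $H^{2k}(X;\Q)$ is a sum of $\langle\pm 1\rangle$'s---guarantees that this kernel is hyperbolic after stabilization, so a final round of surgeries kills it and produces the desired smooth $M$ with $\Q$-equivalence to $X$. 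The additional assertions $f_*[M]=\mu$ and $f^*p_i = p_i(\tau_M)$ are built into the construction since they are preserved by each surgery.

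The main obstacle is the rational-normal-map construction in the second paragraph: one needs the rational identification of stable bundle theory with cohomology (via $\mathrm{ph}$) and a rational Pontryagin--Thom argument to produce the degree-one map out of the raw Pontryagin-number data of~(iii). A secondary subtlety is why $k\neq 1$ is required: for $4k\geq 8$ the middle-dimensional surgery step is standard Wall theory, whereas dimension~$4$ surgery requires different (smooth topology) input and the stated $L$-theoretic argument for the obstruction does not apply cleanly.
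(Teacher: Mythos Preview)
The paper does not prove Theorem~\ref{bs}; it is quoted as a result of Barge and Sullivan (with citations \cite{MR440574}, \cite{MR646078}) and then used as a black box in Sections~\ref{section:octonionic-examples} and~\ref{section:e8-manifolds}. There is therefore no proof in the paper against which to compare your proposal.

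That said, your outline is broadly the correct shape of the Sullivan--Barge argument. The necessity direction is fine. For sufficiency, the architecture---produce a rational reduction of the Spivak fibration via the Pontryagin character, invoke~(iii) to obtain a degree-one normal map, then surger using~(i) to kill the signature obstruction---is right. One point to sharpen: the role you assign to condition~(ii) is not quite accurate. Once~(i) forces the signature difference to vanish, the surgery kernel is already stably hyperbolic; condition~(ii) is not what makes the kernel killable. Rather,~(ii) is the obstruction to the rational intersection form on $X$ arising from an integral unimodular form at all---equivalently, it is what allows one to assemble the prime-by-prime realizations (in Barge's arithmetic-square approach) or to set up a genuine integral Poincar\'e structure on which to run surgery. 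Without~(ii) there is no candidate manifold even before one reaches the middle dimension.
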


\begin{repmaintheorem}{maintheorem:rational-op3}
 If $m>2$ is odd, there exists a closed smooth $8m$-dimensional manifold $M$ with rational cohomology ring
  $$
  H^*(M;\Q) \cong \Q[x]/\langle x^{m+1}\rangle \quad |x| = 8.
  $$
\end{repmaintheorem}
\begin{proof}
  If a rational Poincare duality algebra $\A$ is intrinsically formal,
  it contains a unique rational homotopy type (\cite{MR645331}), i.e., for
  any two simply-connected spaces $X$ and $Y$ such that $H^*(X;
  \Q)\cong H^*(Y; \Q)\cong \A$, $X$ and $Y$ are rational homotopy
  equivalent to each other. Any truncated rational polynomial algebra
  is intrinsically formal (\cite{MR645331}), so $\A=\Q[x]/\langle
  x^4\rangle, |x| = 8$ is intrinsically formal. Similar to the
  approach to study the existence of rational projective planes in
  \cite{MR3158765}, we construct a $\Q$-local space $X$ such that
  $H^*(X;\Z)\cong H^*(X;\Q)\cong \A$, then apply the
  rational surgery realization Theorem~\ref{bs} to determine if there
  exists a manifold $M$ which is rational homotopy equivalent to $X$,
  thus $H^*(M; \Q)\cong\A$. Given a rational homotopy type, the
  theorem provides the sufficient and necessary condition for the
  existence of a simply-connected closed smooth manifold within the
  rational homotopy type.

  Now, consider Theorem~\ref{bs}.  In our case,
  $H^*(X;\Q)\cong\Q[x]/\langle x^{m+1}\rangle, |x|=8$. Note that when $m$ is odd, $8$ does not divide $4m$, therefore the middle dimensional cohomology group
  $H^{4m}(X;\Q)=0$, so there is no obstruction from condition (ii). Moreover, the signature $\sigma(X)=0$. Then in the rational
  surgery realization Theorem~\ref{bs}, the choice that each cohomology class 
  $p_i=0$ with any fundamental class $\mu$ would satisfy
  condition (i),(ii) and (iii). Such a realizing manifold has all the
  Pontraygin numbers vanish.
\end{proof}

\begin{remark} The situation of determining the existence of rational $\OP^m$ for $m$ even turns out to be more complicated. In the 
case of rational $\OP^4$, $A=\Q[x]/\langle x^5\rangle$. Let $p_1=ax$, $p_2=bx^2$,
$p_3=cx^3$, $p_4=dx^4$, then $p_{1,1,1,1}=a^4x^4$, $p_{1,1,2}=a^2b$,
$p_{1,3}=ac$, $p_{2,2}=b^2$, $p_4=d$. The signature equation requires
\[
s_{2,2,2,2}a^4+s_{2,2,4}a^2b+s_{2,6}ac+s_{4,4}b^2+s_8d=\pm 1
\]
Setting $a = 0$ and $b = 20688922800$ and $c = 0$ and $d =
1606120797592276875$ yields a solution, but it is not yet obvious
whether or not the resulting Pontryagin numbers are the genuine Pontryagin
numbers of a smooth manifold.  More computation needs to be done.
\end{remark}

\section{Rational homotopy type of Milnor $E_8$ manifolds}
\label{section:e8-manifolds}

Along the line of realizing a rational homotopy type by smooth
manifolds, we consider the rational homotopy type of the Milnor $E_8$
manifolds. The $E_8$ manifolds $M^{4k}$ are $(2k-1)$-connected closed
topological manifolds constructed by plumbing together disc bundles
over the spheres according to the $E_8$ diagram.  Such a manifold does
not admit any smooth structure.  Nevertheless, one can ask whether the
$E_8$ manifold has the rational homotopy type of a smooth manifold.  We
prove two nonexistence and one existence results.

\begin{proposition}
  \label{proposition:E8-8k-realization}
  The $E_8$ manifold $M^{4k}$ does not have the rational homotopy type
  of a smooth manifold when $k>1$ is odd.
\end{proposition}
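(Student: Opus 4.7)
The plan is to derive a contradiction from the Hirzebruch signature formula. The Milnor $E_8$ manifold $M^{4k}$ is $(2k-1)$-connected with intersection form the positive definite $E_8$ form on $H^{2k}(M;\Q)\cong\Q^{8}$, so its rational cohomology is concentrated in degrees $0$, $2k$, and $4k$, and its signature is $8$. If a closed smooth manifold $N^{4k}$ is rationally homotopy equivalent to $M$, then $\sigma(N)=8$ and each rational Pontryagin class $p_i(\tau_N)\in H^{4i}(N;\Q)$ lies in a group which, for $k$ odd, is zero unless $4i=4k$ (indeed $4i=2k$ has no integer solution when $k$ is odd). Hence among the $p_i(\tau_N)$, only $p_k(\tau_N)$ can be nonzero, and no products of lower Pontryagin classes can land in $H^{4k}$ either.

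Setting $y=\langle p_k(\tau_N),[N]\rangle\in\Z$, the signature formula collapses to $s_k\cdot y=8$, so the numerator of $s_k$ in lowest terms must divide $8$. The remaining task is therefore number-theoretic: show that for every odd $k\geq 3$, the reduced numerator of
\[
s_k=\frac{2^{2k}(2^{2k-1}-1)\,|B_{2k}|}{(2k)!}
\]
contains an odd prime factor exceeding $8$.

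To produce such a prime I plan to apply Zsygmondy's theorem to the Mersenne number $2^{2k-1}-1$. For odd $k\geq 3$ we have $2k-1\geq 5$, so there is a primitive prime divisor $p$ of $2^{2k-1}-1$; by definition the multiplicative order of $2$ modulo $p$ equals $2k-1$, hence $(2k-1)\mid p-1$, and since $p$ is odd while $2k$ is even this forces $p\geq 4k-1$. Then $p>2k$, so $p\nmid (2k)!$, and the von Staudt--Clausen theorem rules out $p\mid \denom(B_{2k})$ because the required divisibility $p-1\mid 2k$ fails (indeed $p-1\geq 4k-2>2k$). Consequently
\[
\nu_p(s_k)=1+\nu_p(\numer(B_{2k}))\geq 1,
\]
so $p$ appears in the reduced numerator of $s_k$; since $p\geq 4k-1\geq 11>8$, we conclude $p\nmid 8$, contradicting $s_k\cdot y=8$ with $y\in\Z$.

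The main technical obstacle is ensuring that the prime $p$ really survives all cancellation between the numerator and denominator of $s_k$, which is exactly what the $p$-adic valuation calculation above guarantees, together with checking that the exceptional cases of Zsygmondy's theorem (notably $n=6$) are avoided; this is automatic because $2k-1$ is odd. Once these points are confirmed, the argument is otherwise routine.
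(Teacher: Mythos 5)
Your proof is correct but takes a genuinely different route from the paper's. Both proofs reduce, in exactly the same way, to showing that $s_k\,y=8$ has no integer solution $y$ when $k>1$ is odd, via the observation that in an odd middle dimension the only possibly-nonzero Pontryagin number is $p_k[N]$. Where you part ways is in how you produce a large prime in the reduced numerator of $s_k$. The paper extracts it from the Bernoulli factor: it cites the fact that $\numer(B_{2k}/2k)$ equals $1$ only for $2k\in\{2,4,6,8,10,14\}$ and is otherwise a product of irregular primes $p>2k+1$ (which therefore cannot cancel against $(2k-1)!$ or $\denom(B_{2k}/2k)$), and then checks $k\in\{3,5,7\}$ by direct computation of $s_3$, $s_5$, $s_7$. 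You instead extract the prime from the Mersenne factor $2^{2k-1}-1$ via Zsygmondy's theorem: since $2k-1\geq 5$ is odd (so the exceptional case $n=6$ is avoided), there is a primitive prime divisor $p$, and the order condition $(2k-1)\mid(p-1)$ together with parity forces $p\geq 4k-1$, whence $p\nmid(2k)!$, $p\nmid\denom(B_{2k})$ by von Staudt--Clausen, and $p\nmid 8$. Your argument has the advantage of being uniform in $k$, eliminating the case analysis for $k\in\{3,5,7\}$ (e.g.\ for $k=3$ it produces $p=31$, for $k=5$ it produces $p=73$, for $k=7$ it produces $p=8191$, all correct), and it leans on a more classical tool than the irregular-prime description of $\numer(B_{2k}/2k)$. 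One small imprecision: you write $\nu_p(s_k)=1+\nu_p(\numer(B_{2k}))$, which tacitly assumes $\nu_p(2^{2k-1}-1)=1$; the inequality $\nu_p(s_k)\geq 1$ is all you use and all you are entitled to, so it is harmless, but the equality should be an inequality.
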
 

\begin{proof}
  Suppose $k > 1$ is odd and that there exists a closed smooth
  manifold $N^{4k}$ which is rational homotopy equivalent to $M$, so
  $H^*(N,\Q)\cong H^*(M;\Q)$.  Note that $p_k[N]$ is the only nonzero
  Pontryagin number of $N$, so the signature equation requires
  \[
  \langle\LP(p(\tau_N)), [N]\rangle=s_kp_k[N]=\sigma(N)=\sigma(M)=8.
  \]
  Moreover, since $N$ is a smooth manifold, $p_k[N]$ is an integer.

  But, as we will see, there is no integer $x$ so that $s_kx=8$ when $k>1$ is
  odd. Suppose there is such an integer $x$, then
 
  \[
  s_kx=\df{2^{2k}(2^{2k-1}-1)\numer(B_{2k})}{(2k)!\denom(B_{2k})}x=\df{2^{2k}(2^{2k-1}-1)\numer(\frac{B_{2k}}{2k})}{(2k-1)!\denom(\frac{B_{2k}}{2k})}x=8.
  \]
  It is known that $\numer(\frac{B_{2k}}{2k})=1$ only for $2k =
  2,4,6,8,10,14$, otherwise $\numer(\frac{B_{2k}}{2k})$ is a product of powers of irregular
  primes $p$ such that $p>2k+1$. 
 
   Suppose $k \not\in\{ 3, 5, 7 \}$; in this case, let $p$ be an
  irregular prime that divides $\numer(\frac{B_{2k}}{2k})$, then the signature equation implies $p$ divides $8\cdot(2k-1)!\cdot \denom(\frac{B_{2k}}{2k})$. But since the odd irregular prime $p>2k+1$, $p$ does not divide $8\cdot (2k-1)!\cdot
  \denom(\frac{B_{2k}}{2k})$. Therefore $s_kx=8$ has no integer solution $x$.

  The other cases can be handled individually.  When $k \in \{3, 5,
  7\}$, the signature equations are
  \[
  s_3x=\frac{62}{945}x=8, \quad s_5x=\df{146}{13365}x=8, \quad s_7x=\frac{32764}{18243225}x=8
  \]
  respectively, and each equation has no integer solution.
\end{proof}

Similar to the proof of Theorem~\ref{maintheorem:two-nonzero-bits}, we use the 2-adic
valuation of $s_{k,k}$ and $s_{2k}$ to show the following nonexistence
result.

  \begin{proposition}\label{proposition:E8-two-adic-nonrealization}
    The $E_8$ manifold $M^{8k}$ does not have the rational homotopy
    type of a smooth manifold when the binary expansion of $k$ has
    more than 5 nonzero bits.
  \end{proposition}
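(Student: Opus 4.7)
The plan is to run the same 2-adic obstruction argument used in the proof of Theorem~\ref{maintheorem:two-nonzero-bits}, with the only change being that the right-hand side of the signature equation is now $\sigma(M^{8k})=8$ rather than $\pm 1$. Suppose for contradiction that there is a closed smooth manifold $N^{8k}$ rationally homotopy equivalent to the $E_8$ manifold $M^{8k}$. Since $M^{8k}$ is $(4k-1)$-connected, its rational cohomology---and hence the rational cohomology of $N$---is concentrated in degrees $0$, $4k$, and $8k$. Therefore the only potentially nonzero Pontryagin classes of $\tau_N$ are $p_k(\tau_N)\in H^{4k}(N;\Q)$ and $p_{2k}(\tau_N)\in H^{8k}(N;\Q)$, and the only Pontryagin numbers appearing in the Hirzebruch formula are $x:=\langle p_k^2(\tau_N),[N]\rangle$ and $y:=\langle p_{2k}(\tau_N),[N]\rangle$; both are integers because $N$ is smooth.

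The signature is a rational homotopy invariant of simply-connected manifolds, as it is determined by the intersection form on middle-degree rational cohomology, so $\sigma(N)=\sigma(M)=8$ and the Hirzebruch signature theorem reduces to
\[
s_{k,k}\,x+s_{2k}\,y=8.
\]
Next I would invoke the precise estimates established inside the proof of Proposition~\ref{proposition:2-adic}: as extended 2-adic valuations, $\nu_2(s_{2k})=\wt(k)-1$ and $\nu_2(s_{k,k})\geq \wt(k)-2$. Since $x,y\in\Z$ have nonnegative 2-adic valuation,
\[
\nu_2\bigl(s_{k,k}\,x+s_{2k}\,y\bigr)\geq\min\{\wt(k)-2,\wt(k)-1\}=\wt(k)-2.
\]
Since $\nu_2(8)=3$, any integer solution forces $\wt(k)-2\leq 3$, i.e.\ $\wt(k)\leq 5$. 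Thus $\wt(k)>5$ is incompatible with the existence of $N$.

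All the number-theoretic heavy lifting is already packaged in Proposition~\ref{proposition:2-adic}, so the only bookkeeping I would need to verify is that even though $H^{4k}(N;\Q)$ has rank $8$ rather than $1$, the self-intersection $\langle p_k^2(\tau_N),[N]\rangle$ is a single well-defined integer---computed as $v^{\top}Qv$ where $Q$ is the $E_8$ intersection form and $v$ represents $p_k(\tau_N)$ in an integral basis---so that the signature equation genuinely collapses to a two-variable linear Diophantine equation in $\Z$. I do not anticipate any real obstacle beyond this check; the argument is essentially a direct transplant of the rational projective plane proof, with $\pm 1$ replaced by $8$ (whose 2-adic valuation $3$ is precisely what forces the bound $\wt(k)\leq 5$).
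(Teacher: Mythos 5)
Your proposal is correct and follows essentially the same route as the paper: both invoke the 2-adic valuation bounds $\nu_2(s_{2k})=\wt(k)-1$ and $\nu_2(s_{k,k})\geq\wt(k)-2$ established inside Proposition~\ref{proposition:2-adic}, apply them to the signature equation with right-hand side $8$, and conclude $\wt(k)-2\leq\nu_2(8)=3$. Your write-up is in fact slightly more careful than the paper's in spelling out the ultrametric inequality and in noting that $\langle p_k^2(\tau_N),[N]\rangle$ is a single well-defined integer even though $H^{4k}(N;\Q)$ has rank $8$.
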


  \begin{proof}
    Suppose there exists a closed smooth manifold $N^{8k}$ which is rational homotopy equivalent to $M$.  In this case, the signature equation says
    \[
    s_{k,k}\langle p_{k,k}, \mu \rangle + s_{2k} \langle p_{2k}, \mu\rangle = 8.
    \]
    where $\langle p_{k,k}, \mu\rangle$ and $\langle p_{2k}, \mu\rangle$ are
    integers. As in Section~\ref{section:rational-planes}, define
    $\wt(k)$ to be the number of nonzero bits in the binary expansion
    of $k$. By Proposition~\ref{proposition:2-adic}, the numerators of
    both $s_{k,k}$ and $s_{k}$ are divisible by $2^{\wt(k)}-2$. When
    $\wt(k)>5$, the left hand side of the signature equation is
    divisible by $16$, and therefore the equation has no integer
    solution.
\end{proof}
A specific instance of Proposition~\ref{proposition:E8-two-adic-nonrealization} is
in dimension $504$; note that
\[
504 = 2^3 + 2^4 + 2^5 + 2^6 + 2^7 + 2^8,
\]
and so $\wt(504) = 6$.  The Milnor $E_8$ manifold $M^{504}$ is a
topological manifold which does not have the rational homotopy type of
a smooth manifold.

\begin{proposition}
  \label{proposition:E8-realization}
  The $8$-dimensional $E_8$ manifold $M^{8}$ has the rational homotopy
  type of a smooth manifold.
\end{proposition}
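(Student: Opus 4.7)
The plan is to apply the Barge--Sullivan realization theorem (Theorem~\ref{bs}) to the $\Q$-localization $X$ of the $E_8$ manifold $M^8$. Since $M^8$ is $3$-connected, $H^*(X;\Q)$ is concentrated in degrees $0$, $4$, and $8$; the middle degree is $\Q^8$ carrying the $E_8$ form extended to $\Q$, and $\sigma(X)=8$.

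The first step is to verify condition (ii) of Theorem~\ref{bs}, namely that the intersection form on $H^4(X;\Q)$ is a sum of $\langle 1\rangle$'s and $\langle -1\rangle$'s. This reduces to the classical fact that, although the $E_8$ form is a distinguished even unimodular $\Z$-lattice, it becomes equivalent over $\Q$ to $\langle 1\rangle^{\oplus 8}$: both forms are positive definite of rank $8$ with square discriminant and identical Hasse invariants at each prime, and so Hasse--Minkowski applies. This is the main piece of technical input in the argument.

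For conditions (i) and (iii), rather than solving the signature equation abstractly and then verifying integrality, it is cleaner to import Pontryagin numbers from a geometric witness. Take $N = \#_8 \HP^2$, a closed smooth $8$-manifold. From $p(\HP^2) = (1+u)^6/(1+4u) \equiv 1 + 2u + 7u^2 \pmod{u^3}$ one reads off $p_1^2[\HP^2]=4$ and $p_2[\HP^2]=7$; by additivity of Pontryagin numbers under connected sum,
\[
p_1^2[N]=32, \qquad p_2[N]=56, \qquad \sigma(N)=\frac{7\cdot 56-32}{45}=8.
\]
Using condition (ii), pick $p_1 \in H^4(X;\Q)$ with $\langle p_1^2,\mu\rangle=32$ (possible because $\sum c_i^2 = 32$ has a rational solution in the orthonormal basis provided by the diagonalization of the form) and $p_2 \in H^8(X;\Q)$ with $\langle p_2,\mu\rangle=56$. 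Condition (i) is then immediate from the signature computation above, and condition (iii) holds by construction, with $N$ as the witnessing smooth manifold.

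Theorem~\ref{bs} now delivers a closed smooth $8$-manifold rationally homotopy equivalent to $X$, hence to $M^8$. The only nontrivial step is the rational equivalence $E_8 \sim_\Q \langle 1\rangle^{\oplus 8}$; everything else reduces to an explicit calculation with $\#_8 \HP^2$.
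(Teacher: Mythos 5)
Your proof is correct and follows essentially the same strategy as the paper: apply the Barge--Sullivan realization theorem (Theorem~\ref{bs}), choose $p_1,p_2$ so that the resulting Pontryagin numbers are those of an explicit smooth $8$-manifold of signature~$8$, and conclude. The two proofs differ only in the choice of geometric witness: the paper takes $\#_8\CP^4$, reading off $p_1^2[\CP^4]=25$, $p_2[\CP^4]=10$ and setting $p_1=10a_1$, $p_2=40a_1^2$ so that $\langle p_1^2,\mu\rangle=200$ and $\langle p_2,\mu\rangle=80$; you take $\#_8\HP^2$, giving $(p_1^2,p_2)=(32,56)$. Both choices satisfy conditions (i) and (iii), so this is a cosmetic difference. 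One thing you do that the paper does not is explicitly verify condition~(ii), namely that the $E_8$ form becomes $\langle 1\rangle^{\oplus 8}$ over $\Q$ (via Hasse--Minkowski); the paper simply fixes a basis in which the intersection form is the $E_8$ Cartan matrix and proceeds, leaving the rational diagonalizability implicit. Your version is therefore slightly more complete on that point, though the underlying argument is the same. Two small remarks: once you have an orthonormal basis you can more simply take all $c_i=2$, so the Diophantine observation is immediate; and in fact you never need the orthonormal basis at all for producing $p_1$, since $p_1=4a_1$ already gives $\langle p_1^2,\mu\rangle=16\cdot 2=32$ in the $E_8$ basis — the rational diagonalization is only needed for condition~(ii) itself.
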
 

\begin{proof} 
  Following our approach to the realization of rational projective
  planes and rational $OP^4$, we again apply the surgery realization
  Theorem~\ref{bs}. Let $X$ be the $\Q$-localization of $M^{4k}$, so
  $H^*(X;\Z)\cong H^*(M^8;\Q)$. Let $a_i\in H^4(X;\Q)=\Q^8$ be a
  generator of the $i$-th summand of $\Q^8$, and let $\mu\in
  H_8(X;\Q)$ be a fundamental class, such that the intersection form
  on $H^4(X;\Q)$ with respect to the basis $a_i$'s and $\mu$ is
  exactly the $E_8$ form. We seek a choice of $p_1, p_2\in H^*(X;\Q)$
  such that the signature formula
  \begin{equation}\label{dim8}
    s_{1,1}\langle p_1^2,\mu\rangle+s_{2}\langle p_{2},\mu\rangle=8
  \end{equation}
  is satisfied. Note that $p_1^2[\CP^4] = 25$, and $p_2[\CP^4] = 10$.
  The signature formula says $s_{1,1}p_1^2[\CP^4]+s_2p_{2}[\CP^4]
  =1$. Then we let $p_1=10 a_1$, $p_2=40a_1^2$ so that
  \begin{align*}
    \langle p_{1,1},\mu\rangle=100\langle a_1^2,\mu\rangle=200&=8p_1^2[\CP^4],\\
    \langle p_{2},\mu\rangle=40\langle a_1^2,\mu\rangle=80&=8p_{2}[\CP^4].
  \end{align*}
  Then the signature formula \eqref{dim8} is satisfied.  Also, $\langle
  p_{1,1},\mu\rangle$ and $\langle p_{2},\mu\rangle$ are the Pontragin
  numbers of a genuine closed smooth manifold. Having met all the
  conditions of the surgery realization Theorem~\ref{bs}, we may
  conclude that $M^{8}$ has the rational homotopy of a smooth manifold.
\end{proof}

Propositions~\ref{proposition:E8-8k-realization},
\ref{proposition:E8-two-adic-nonrealization}, and
\ref{proposition:E8-realization} combine to yield
Theorem~\ref{maintheorem:e8-manifolds}.
 
\appendix
\section{Computing the $L$-polynomial}
\label{appendix:recursive-forumula}

In general, our approach of studying the realization of a rational
cohomology ring by smooth manifold requires finding the coefficients
of the $k$-th $\LP$ polynomial. This is harder than it may seem at
first.  A na\"\i ve approach for this calculation is to directly
express the homogenous part of degree $k$ in
\[
1+\LP_1(p_1)+\LP_2(p_1, p_2)+\ldots+\LP_k(p_1, \ldots, p_k)=f(t_1)f(t_2)\cdots f(t_k),
\]
by the elementary symmetric polynomials $p_i=\sigma_i(t_i)$, 
where the generating function 
\[
f(t)= \frac{\sqrt{t}}{\tanh \sqrt{t}} = \sum_{k=0}^\infty \frac{2^{2k} \,
    B_{2k} \, t^k}{(2k)!} = 1 + \frac{t}{3} - \frac{t^2}{45} + \cdots.
\]
This approach involves expanding the product of power series of large
degree; it is not nearly efficient enough for our desired
applications.  Here we give an recursive algorithm for calculating the
$\LP$ polynomial.

For each partition $I=i_1, \ldots, i_r$ of $k$, let $s_I$ denote the
coefficient of the $I$-th Pontryagin class $p_I=p_{i_1}\cdots p_{i_r}$
in the $k$-th $\LP$-polynomial, i.e., we express
\[
\LP_k(p_1, p_2, \ldots, p_k)=\sum_{|I|=k}s_Ip_I
\]

From \cite{MR440554}, $s_k$ can be calculated by the formula
\begin{equation}\label{sk}
  s_k=\displaystyle\frac{2^{2k}(2^{2k-1}-1)|B_{2k}|}{(2k)!}.
\end{equation}
And in \cite{MR238332}, it was derived that
\begin{equation}\label{sk2}
  s_{k, k}=\frac{1}{2}(s_k^2-s_{2k})
\end{equation}
This is the first case of a recursive formula.  In general, for any
partition $I$ of $k$, one can calculate $s_I$ in terms of the $s_J$'s,
where $I$ is a refinement of $J$.
\begin{proposition}\label{s}
Let $I=\underbrace{i_1,\ldots,i_1}_{\mu(i_1)}, \cdots, \underbrace{i_n,\ldots i_n}_{\mu(i_n)}$ be a partition of $k$, then 
\begin{equation}\label{sI}
  s_I=\df{1}{\mu(i_1)!\cdots \mu(i_n)!}\left(s_{i_1}^{\mu(i_1)}\cdots s_{i_n}^{\mu(i_n)}-\ds_{|J|=k,J>I}a_{J}\, \mu(j_1)!\cdots \mu(j_m)!\, s_J\right),
\end{equation}
where $J>I$ means that $J$ is any partition of $k$ such that $I$ is a proper refinement
of $J$. For each
$J=\underbrace{j_1,\ldots,j_1}_{\mu(j_1)}, \cdots,
\underbrace{j_m,\ldots j_m}_{\mu(j_m)}$, the coefficient $a_J$ counts
the number of partitions $P=\{A_1, \ldots, A_m\}$ of the multiset
$\{I\}$ such that $J=\ds_{a\in A_1}a, \ldots, \ds_{a\in A_m}a$.
\end{proposition}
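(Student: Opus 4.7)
My plan is to derive the recursion by applying multiplicativity of the total $\LP$-class to a carefully chosen direct sum of auxiliary bundles, each carrying exactly one nonzero Pontryagin class.

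Let $N=\mu(i_1)+\cdots+\mu(i_n)$ be the total number of parts of $I$, and fix a map $\phi\colon\{1,\ldots,N\}\to\{i_1,\ldots,i_n\}$ with $|\phi^{-1}(i_r)|=\mu(i_r)$, so that the multiset $\{\phi(1),\ldots,\phi(N)\}$ is the labeled multiset of parts $\{I\}$. Introduce formal indeterminates $x_1,\ldots,x_N$, and consider abstract bundles $E_1,\ldots,E_N$ whose only nonzero Pontryagin class is $p_{\phi(s)}(E_s)=x_s$. Since $\LP_\ell(p(E_s))$ can be nonzero only when $\ell=m\phi(s)$ for some $m\ge 0$, in which case it equals $s_{(\phi(s)^m)}x_s^m$, the total $\LP$-class of $E_s$ is
\[
\LP(E_s)=1+\sum_{m\geq 1}s_{(\phi(s)^m)}x_s^m,
\]
and in particular its coefficient of $x_s$ is $s_{\phi(s)}$.

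Multiplicativity, $\LP(\bigoplus_s E_s)=\prod_s\LP(E_s)$, lets us compute the coefficient of the squarefree monomial $x_1\cdots x_N$ in two ways. On the right, one must select the $x_s$-linear term $s_{\phi(s)}x_s$ from each factor, yielding $\prod_s s_{\phi(s)}=\prod_{r=1}^n s_{i_r}^{\mu(i_r)}$. On the left, write $\LP_k(\bigoplus_s E_s)=\sum_J s_J\,p_J(\bigoplus_s E_s)$; since $p_j(\bigoplus_s E_s)=\sum_{S\colon\sum_{s\in S}\phi(s)=j}\prod_{s\in S}x_s$, expanding $p_J=p_{j_1}^{\mu(j_1)}\cdots p_{j_m}^{\mu(j_m)}$ as an ordered product shows that the coefficient of $x_1\cdots x_N$ in $p_J(\bigoplus_s E_s)$ equals the number of ordered set-partitions $(S_1,\ldots,S_M)$ of $\{1,\ldots,N\}$ whose block-sums (with respect to $\phi$) match the multiset of parts of $J$.

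Every such ordered partition arises from a unique unordered set-partition $P=\{A_1,\ldots,A_M\}$ by choosing an ordering of its blocks consistent with the block-sum constraints, and the number of valid orderings is $\mu(j_1)!\cdots\mu(j_m)!$, obtained by permuting blocks of equal sum. The unordered set-partitions themselves are exactly those counted by $a_J$ in the statement. Hence the coefficient of $x_1\cdots x_N$ on the left is $\sum_J a_J\,\mu(j_1)!\cdots\mu(j_m)!\,s_J$, summed over partitions $J$ of $k$ of which $I$ is a refinement, with the term $J=I$ contributing $\mu(i_1)!\cdots\mu(i_n)!\,s_I$ (here $a_I=1$, corresponding to the partition of $\{I\}$ into singletons). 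Equating the two expressions and isolating $s_I$ yields formula \eqref{sI}.

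The main technical point requiring care is the ordered-versus-unordered bookkeeping when $J$ has repeated parts, so that the factorial correction $\mu(j_1)!\cdots\mu(j_m)!$ precisely accounts for the orderings of blocks with common sum and matches the convention for $a_J$. Beyond this combinatorial accounting the argument is purely formal, relying only on the multiplicativity of the Hirzebruch $\LP$-class and the observation that each $E_s$ has exactly one nonzero Pontryagin class.
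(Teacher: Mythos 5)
Your argument is correct and essentially reproduces the paper's proof: both compute the coefficient of the squarefree top-degree Pontryagin monomial in $\LP_k$ of a Whitney sum of bundles each carrying a single nonzero Pontryagin class, once via multiplicativity and once via the expansion $\LP_k = \sum_J s_J\,p_J$, and then solve for $s_I$. The only cosmetic difference is that the paper realizes the one-class bundles geometrically over a product of spheres $\prod S^{4i_r}$ (so that the relevant classes square to zero automatically), while you work with formal indeterminates $x_s$; this distinction is immaterial since only the squarefree coefficient is extracted, and your ordered-versus-unordered block-counting matches the paper's interpretation of $a_J$ as partitions of the \emph{labelled} multiset $\{I\}$.
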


\begin{proof}
   For any partition $I=\underbrace{i_1,\ldots,i_1}_{\mu(i_1)}, \cdots,
  \underbrace{i_n,\ldots i_n}_{\mu(i_n)}$ of $|I|=k$, consider the
  corresponding product of spheres $X=\dpr_{l=1}^{\mu(i_1)}
  S^{4i_1}_{(l)}\times\cdots \times\dpr_{l=1}^{\mu(i_n)}
  S^{4i_n}_{(l)}$. Let $\xi_{i}^{(l)}$ be any real vector bundle over
  the $l$-th $4i$-dimensional sphere $S^{4i}_{(l)}$ in the product.
  We denote the total Pontraygin class
  $p(\xi_{i}^{(l)})=1+p_{i}^{(l)}$, where $p_{i}^{(l)}\in
  H^{4i}(S^{4i}_{(l)};\Z)$. Consider the product bundle
  $\eta=\dpr_{l=1}^{\mu(i_1)} \xi_{i_1}^{(l)}\times\cdots
  \times\dpr_{l=1}^{\mu(i_n)} \xi_{i_n}^{(l)}$, the total Pontryagin
  class $p(\eta)=\dpr_{l=1}^{\mu(i_1)} (1+p_{i_1}^{(l)})\times\cdots
  \times\dpr_{l=1}^{\mu(i_n)} (1+p_{i_n}^{(l)})$. In the
  $\LP$-polynomial $\LP_{|I|}(p(\eta))$, the coefficient of
  $\dpr_{l=1}^{\mu(i_1)} p_{i_1}^{(l)}\times\cdots\times
  \dpr_{l=1}^{\mu(i_n)} p_{i_n}^{(l)}$ is $\ds_{J\geq I}a_{J}\,
  \mu(j_1)!\cdots \mu(j_m)!\, s_J$, where $J$ is any partition of
  $|I|$ such that $I$ is a refinement of $J$ ($J\geq I$).

On the other hand, by the multiplicativity of the $\LP$-polynomials,
\begin{align*}
\LP(p(\eta))&=\dpr_{l=1}^{\mu(i_1)} \LP(p(\xi_{i_1}^{(l)})\times\cdots \times\dpr_{l=1}^{\mu(i_n)} \LP(p(\xi_{i_n}^{(l)}) \\
&=\dpr_{l=1}^{\mu(i_1)} (\cdots+s_{i_1}p_{i_1}^{(l)}+\cdots)\times\cdots \times\dpr_{l=1}^{\mu(i_n)}  (\cdots+s_{i_n}p_{i_n}^{(l)}+\cdots) \\
&= \cdots+ s_{i_1}^{\mu(i_1)}\cdots s_{i_n}^{\mu(i_n)}\left(\dpr_{l=1}^{\mu(i_1)} p_{i_1}^{(l)}\times\cdots\times \dpr_{l=1}^{\mu(i_n)} p_{i_n}^{(l)}\right)+\cdots
\end{align*}
Having obtained two expressions for the coefficient of $\dpr_{l=1}^{\mu(i_1)} p_{i_1}^{(l)}\times\cdots\times \dpr_{l=1}^{\mu(i_n)} p_{i_n}^{(l)}$ in $\LP(p(\eta))$, we may equate them to get the equation
\begin{align*}
s_{i_1}^{\mu(i_1)}\cdots s_{i_n}^{\mu(i_n)}&=\ds_{|J|=k, J\geq I}a_{J}\, \mu(j_1)!\cdots \mu(j_m)!\, s_J \\
&=\mu(i_1)!\cdots \mu(i_n)!\, s_I+\ds_{|J|=k, J>I}a_{J}\, \mu(j_1)!\cdots \mu(j_m)!\, s_J,
\end{align*}
from which we solved
\[
s_I=\df{1}{\mu(i_1)!\cdots \mu(i_n)!}\left(s_{i_1}^{\mu(i_1)}\cdots s_{i_n}^{\mu(i_n)}-\ds_{|J|=k, J>I}a_{J}\, \mu(j_1)!\cdots \mu(j_m)!\, s_J\right)
\]
\end{proof} 

\begin{example}
\label{example:computing-s1122}
Consider the case of finding $s_{1,1,2,2}$. The corresponding product
of sphere is $X=S^4_{(1)}\times S^4_{(2)}\times S^8_{(1)}\times
S^8_{(2)}$, over which any vector bundle
$\eta=\xi_1^{(1)}\times\xi_1^{(2)}\times\xi_2^{(1)}\times\xi_2^{(2)}$
has the total Pontryagin class
$p(\eta)=(1+p_1^{(1)})\times(1+p_1^{(2)})\times(1+p_2^{(1)})\times(1+p_2^{(2)})$. We
want to find the coefficient of $p_1^{(1)}\times p_1^{(2)}\times
p_2^{(1)}\times p_2^{(2)}$ in the $\LP$-polynomial
$\LP_{6}(p(\eta))=\sum_Js_{J}p_{J}(\eta)$. Firstly note that
$p_1^{(1)}\times p_1^{(2)}\times p_2^{(1)}\times p_2^{(2)}$ is only
contained in a $p_J(\eta)$ where $J\geq I$, i.e., partition $J$ such that $I=1,1, 2, 2$ is a
refinement of $J$.

For starters, consider $J=(2,4)$,
\begin{align*}
p_{2,4}(\eta)&=p_2(\eta)p_4(\eta) \\
&=(p_1^{(1)}\times p_1^{(2)}\times 1\times 1 + 1\times 1\times p_2^{(1)}\times 1+1\times 1\times 1\times p_2^{(2)}) \\
&\quad (p_1^{(1)}\times p_1^{(2)}\times p_2^{(1)}\times 1 + p_1^{(1)}\times p_1^{(2)}\times 1\times p_2^{(2)}+1\times 1\times p_2^{(1)}\times p_2^{(2)})\\
&=(3)(p_1^{(1)}\times p_1^{(2)}\times p_2^{(1)}\times p_2^{(2)})+\mbox{ other terms, }
\end{align*}
where the coefficient $a_J=3$ counts the 3 partitions 
\[
  \{\{1_1,1_2\}, \{2_1,2_2\}\},\  \{\{2_1\},\{1_1,1_2,2_2\}\}, \ \{\{2_2\}, \{1_1, 1_2, 2_1\}\}
\]
of the multiset  $\{I\}=\{1_1,1_2,2_1,2_2\}$. These are the only partitions $\{A_1, A_2\}$ corresponding to $J=(2,4)$ in the sense that $\sum_{a\in A_1}a=2$, and  $\sum_{a\in A_1}a=4$.\\

Next, consider $J=(2,2,2)$,
\begin{align*}
p_{2,2,2}(\eta)&=p_2(\eta)p_2(\eta)p_2(\eta)\\
&=(p_1^{(1)}\times p_1^{(2)}\times 1\times 1 + 1\times 1\times p_2^{(1)}\times 1+1\times 1\times 1\times p_2^{(2)})^3\\
&=(1)(3!)(p_1^{(1)}\times p_1^{(2)}\times p_2^{(1)}\times p_2^{(2)})+\mbox{ other terms, }
\end{align*}
where the coefficient $a_J=1$ counts the single partition $\{A_1, A_2,
A_3\}=\{\{1_1,1_2\}, \{2_1\}, \{2_2\}\}$ of the multiset
$\{I\}=\{1_1,1_2,2_1,2_2\}$ such that $\sum_{a\in A_i}a=2$ for $i=1,
2, 3$. The coefficient $3!$ came from the multiplicity of the parts of
$J$.

We proceed in this fashion, eventually producing the
Table~\ref{table:ajs-for-example} which lists all such partitions $J$
and the corresponding $a_J$.

\begin{table}
\caption{\label{figure:ajs-for-example}Partitions $J$ (and corresponding $a_J$) which refine to $(1,1,2,2)$ for use in Example~\ref{example:computing-s1122}.}
\begin{center}
    \begin{tabular}{  | p{1.5cm} | p{9.8cm} | p{0.5cm} | }    \hline
  $J$ ($J\geq I$)& corresponding partitions of the multiset $\{I\}=\{1_1,1_2,2_1,2_2\}$& $a_J$ \\ 
    \hline
      $(1,1,2,2)$ & $\{\{1_1\},\{1_2\}, \{2_1\}, \{2_2\}\}$& $1$ \\     \hline
    $(2,2,2)$& $\{\{1_1,1_2\}, \{2_1\}, \{2_2\}\}$& $1$\\     \hline
   $(1,2,3)$ & $\{\{1_1\}, \{2_1\}, \{1_2, 2_2\}\}, \{\{1_1\}, \{2_2\},\{1_2, 2_1\}\}, \{\{1_2\}, \{2_1\},\{1_1, 2_2\}\}$, $\{\{1_2\}, \{2_2\},\{1_1, 2_1\}\}$& $4$ \\  \hline
    $(1,1,4)$& $\{\{1_1\}, \{1_2\}, \{2_1,2_2\}\}$ & $1$ \\     \hline
   $(2,4)$& $\{\{1_1,1_2\}, \{2_1,2_2\}\}, \{\{2_1\},\{1_1,1_2,2_2\}\}, \{\{2_2\}, \{1_1, 1_2, 2_1\}\}$ &$3$\\     \hline
   $(3,3)$ & $\{\{1_1,2_1\}, \{1_2,2_2\}\}, \{\{1_1,2_2\}, \{1_2,2_1\}\}$& $2$\\ \hline
   $(1,5)$& $\{\{1_1\},\{1_2, 2_1, 2_2\}\}, \{\{1_2\},\{1_1, 2_1, 2_2\}\}$& $2$ \\     \hline
    $(6)$ & $\{\{1_1,1_2, 2_1, 2_2\}\}$& $1$\\ 
    \hline
  \end{tabular}
\end{center}
\end{table}

The coefficient of $p_1^{(1)}\times p_1^{(2)}\times p_2^{(1)}\times p_2^{(2)}$ in the $\LP$-polynomial $\LP_{6}(p(\eta))=\sum_Js_{J}p_{J}(\eta)$ is then $\ds_{J\geq I}a_{J}\, \mu(j_1)!\cdots \mu(j_m)!\, s_J$. On the other hand, 
\begin{align*}
\LP(p(\eta))&=\LP(p(\xi^{(1)}_1))\times\LP(p(\xi^{(2)}_1))\times\LP(p(\xi^{(1)}_2))\times\LP(p(\xi^{(2)}_2))\\
&=\cdots + (s_1s_1s_2s_2)\,p_1^{(1)}\times p_1^{(2)}\times p_2^{(1)}\times p_2^{(2)}+\cdots.
\end{align*}
Equate the two formulas for the coefficient.  Then we obtain
\begin{align*}s_1^2s_2^2&=\ds_{J\geq I}a_{J}\, \mu(j_1)!\cdots \mu(j_m)!\, s_J \\
&=(1)(2!2!)s_{1,1,2,2}+(1)(3!)s_{2,2,2}+(4)s_{1,2,3}+(1)(2!)s_{1,1,4}+(3)s_{2,4}+(2)(2!)s_{3,3}+(2)s_{1,5}+(1)s_6,
\end{align*}
and from this we conclude
\[
  s_{1,1,2,2}=\df{1}{2!2!}(s_1^2s_2^2-6s_{2,2,2}-4s_{1,2,3}-2s_{1,1,4}-3s_{2,4}-4s_{3,3}-2s_{1,5}-s_6).
\]
\end{example}

We can also find formulas which generalize Equation~\eqref{sk2} for $s_{n,n}$.
\begin{example} 
Here we list some explicit examples of recursive formulas for $s_I$.

\begin{itemize} 
\item If $m\neq n$, then $s_{m,n}=s_ms_n-s_{m+n}.$
\item For any $n$,
\[
s_{n,n,n}=\frac{1}{3!}(s_n^3-3s_{2n,n}-s_{3n}).
\]
\item For any $n$,
\[
s_{n,n,2n}=\frac{1}{2}(s_{2n}s_n^2-2s_{3n,n}-2s_{2n,2n}-s_{4n})
\]
\item If $m\neq n$ and $m\neq 2n$, then
\[
s_{n,n, m}=\frac{1}{2}(s_ms_n^2-s_{m,2n}-2s_{m+n,n}-s_{m+2n}).
\]
\item If $m<n<k$ and $m+n\neq k$, then
\[
s_{m,n,k}=s_ms_ns_k-s_{m+n,k}-s_{m+k,n}-s_{n+k,m}-s_{m+n+k}.
\]
\end{itemize}
\end{example}
Formulas such as these will prove useful in working through more
complicated versions of the spaces considered in
Section~\ref{section:octonionic-examples}.

\bibliographystyle{amsalpha}
\bibliography{references}

\providecommand{\bysame}{\leavevmode\hbox to3em{\hrulefill}\thinspace}
\providecommand{\MR}{\relax\ifhmode\unskip\space\fi MR }
\providecommand{\MRhref}[2]{%
  \href{http://www.ams.org/mathscinet-getitem?mr=#1}{#2}
}
\providecommand{\href}[2]{#2}
\begin{thebibliography}{And69}

\bibitem[And69]{MR238332}
Douglas~R. Anderson, \emph{On homotopy spheres bounding highly connected
  manifolds}, Trans. Amer. Math. Soc. \textbf{139} (1969), 155--161.
  \MR{0238332 (38 \#6608)}

\bibitem[Bar76]{MR440574}
Jean Barge, \emph{Structures diff\'erentiables sur les types d'homotopie
  rationnelle simplement connexes}, Ann. Sci. \'Ecole Norm. Sup. (4) \textbf{9}
  (1976), no.~4, 469--501. \MR{0440574 (55 \#13448)}

\bibitem[BH11]{MR2813369}
J.~P. Buhler and D.~Harvey, \emph{Irregular primes to 163 million}, Math. Comp.
  \textbf{80} (2011), no.~276, 2435--2444. \MR{2813369 (2012j:11243)}

\bibitem[FH82]{MR645331}
Yves F{\'e}lix and Stephen Halperin, \emph{Formal spaces with
  finite-dimensional rational homotopy}, Trans. Amer. Math. Soc. \textbf{270}
  (1982), no.~2, 575--588. \MR{645331 (83h:55023)}

\bibitem[Hat02]{MR1867354}
Allen Hatcher, \emph{Algebraic topology}, Cambridge University Press,
  Cambridge, 2002. \MR{1867354 (2002k:55001)}

\bibitem[MS74]{MR440554}
John~W. Milnor and James~D. Stasheff, \emph{Characteristic classes}, Princeton
  University Press, Princeton, N. J., 1974, Annals of Mathematics Studies, No.
  76. \MR{0440554 (55 \#13428)}

\bibitem[Sto66]{MR192516}
R.~E. Stong, \emph{Relations among characteristic numbers. {II}}, Topology
  \textbf{5} (1966), 133--148. \MR{0192516 (33 \#741)}

\bibitem[Su14]{MR3158765}
Zhixu Su, \emph{Rational analogs of projective planes}, Algebr. Geom. Topol.
  \textbf{14} (2014), no.~1, 421--438. \MR{3158765}

\bibitem[Sul77]{MR646078}
Dennis Sullivan, \emph{Infinitesimal computations in topology}, Inst. Hautes
  \'Etudes Sci. Publ. Math. (1977), no.~47, 269--331 (1978). \MR{0646078 (58
  \#31119)}

\end{thebibliography}

\end{document}